\newtheorem{theorem}{Theorem}[section]
\newtheorem{lemma}[theorem]{Lemma}
\newtheorem{definition}[theorem]{Definition}
\newtheorem{corollary}[theorem]{Corollary}
\newtheorem{conjecture}[theorem]{Conjecture}
\theoremstyle{remark}
\newtheorem{remark}[theorem]{Remark}
\newtheorem{example}[theorem]{Example}
\def\Z{\mathbb{Z}}
\def\Q{\mathbb{Q}}
\def\N{\mathbb{N}}
\def\R{\mathbb{R}}
\def\T{{\cal{T}}}
\def\gpB{{\cal{B}}}
\def\gpC{{\cal{C}}}
\def\RT{{\cal{RT}}}
\def\bfw{{\bf w}}
\def\bfB{{\bf B}}
\def\bfC{{\bf C}}
\def\bfU{{\bf U}}
\def\bfV{{\bf V}}
\def\bfn{{\bf n}}
\def\backn{{\overset{\leftarrow}{\bfn}}}
\def\backC{{\overset{\longleftarrow}{\bf C}}}
\def\s{{\cal S}}
\title{Jones rational coincidences}
\author{Ruth Lawrence\footnote{Einstein Institute of Mathematics, Hebrew University of Jerusalem}$\ $ and Ori Rosenstein$^*$}
\begin{document}

\maketitle
\centerline{\it In memory of Vaughan Jones} 
\medskip

\noindent \textbf{Abstract:}
We investigate coincidences of the (one-variable) Jones polynomial amongst rational knots, what we call `Jones rational coincidences'. We provide moves on the continued fraction expansion of the associated rational which we prove do not change the Jones polynomial and conjecture (based on experimental evidence from all rational knots with determinant $<900$) that these moves are sufficient to generate all Jones rational coincidences. These coincidences are generically not mutants, as is verified by checking the HOMFLYPT polynomial. In the process we give a new formula for the Jones polynomial of a rational knot based on a continued fraction expansion of the associated rational, which has significantly fewer terms than other formulae  known to us. The paper is based on the second author's Ph.D.~thesis and gives an essentially self-contained account.

\noindent{\it Keywords: Jones polynomial, rational knots, rational tangles}

\noindent{\it Mathematics Subject Classification 2020:} 57K10

\section{Introduction}
The (one-variable) Jones polynomial was defined in \cite{Jones} as an ambient isotopy invariant of links in $S^3$. Although it has subsequently been found to be connected with many interesting other mathematical structures (Temperley-Leib algebra, quantum group $U_qsl_2$, topological quantum field theory etc.), many basic properties remain unknown. For example, it is still unknown whether Jones' conjecture holds, that $V_K$ distinguishes the unknot from other knots, that is $V_K\equiv1$ for a knot $K$ only if $K$ is the unknot; see \cite{TS} for numerical evidence up to 22 crossings. It is known that Conway mutation \cite{Conway} and various generalisations generate knots with identical Jones polynomial, see \cite{P} and references therein, as well as \cite{Jones92}. Kanenobu \cite{Kanenobu} constructed a very special infinite family of 3-bridge knots $K_{p,q}$ ($p,q\in\Z$) whose Jones polynomial depends only on $p+q$; that is, an infinite family of infinite families each sharing the same Jones polynomial. Further examples are in \cite{Kanenobu86}. It is still unclear whether coincidences should be considered more the rule than the exception.

In this paper we concentrate on coincidences amongst Jones' polynomials of rational knots. Since the determinant of a knot is a special value of the Jones polynomial and there are only a finite number of rational knots with given determinant, the number of coincidences is limited and can be checked by direct computation.

In sections 2,3,4 we give an elementary and self-contained account of the evaluation of the Jones polynomial of rational knots as a matrix product (Theorems \ref{Jonesrationalformula} and \ref{Jonesevenformula}) via the Kauffman bracket vector of $(2,2)$-tangles. Similar matrix formulae were used in \cite{Duzhin} and \cite{KanenobuSumi} for computer computations of the Jones polynomial and other polynomial invariants of rational knots.

By a careful analysis of cancellations of terms in the evaluation of the matrix product, we obtain in section 5 a seemingly new combinatorial formula (Theorem \ref{jonesqnewformula}) for the Jones polynomial of rational knots in terms of a continued fraction expansion, with significantly fewer terms than other formulae we found in the literature, such as in \cite{Duzhin}.

By analyzing properties of products of the particular family of matrices appearing, we prove in section 6 the invariance of the Jones polynomial of rational knots under a family of moves, Templates I, II (Theorems \ref{firsttemplate}, \ref{secondtemplate}). A third move, a generalization of Template I, is introduced in section 7 (Theorem \ref{pivotingpairstemplate}) using the notion of pivoting pairs defined there. On the way a new structural property of Jones polynomials of rational knots is found (Corollary \ref{Jonesrationalform}). In \cite{R} it was found that out of 80,317 rational knots with determinant $<900$, there are 223 Jones rational coincidences, three of which are triplets; these are listed in \S8. All of these coincidences were accounted for via the moves proved in sections 6,7 and we conjecture that they are indeed generating moves for Jones rational coincidences.

\section{Kauffman bracket}
Let $A$ be an indeterminate and $R=\Z[A,A^{-1}]$ be the ring of polynomials in $A,A^{-1}$ with integer coefficients. Set $d=-A^2-A^{-2}\in{}R$, $u=-iA^2$ and $t=A^{-4}$.

The usual Kauffman bracket algorithm \cite{Kauffman} to calculate the Jones polynomial of an oriented link $L$ in $S^3$ works as follows. Choose a link diagram $D$ of $L$   and evaluate it (as an unoriented diagram) by replacing every crossing by a combination of diagrams  in which the crossing has been smoothed in the two possible ways
\begin{equation}\label{eq:skeinrelation}
\includegraphics[width=.5\textwidth]{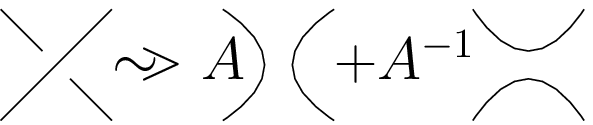}
\end{equation}
A diagram with $n$ crossings is thus replaced by a combination of $2^n$ diagrams, each of which has no crossings, that is, all resulting diagrams are isotopic to a disjoint collection of loops; the coefficients are powers of $A$. Evaluating a diagram which is a disjoint collection of $k$ loops to $d^{k-1}$, the combined evaluation of $D$ is its Kauffman bracket, written $\langle{}D\rangle{}\in{}R\equiv\Z[A,A^{-1}]$.

Equivalently, one can consider the appropriate skein modules. Namely in the $R$-module generated by all link diagrams, one imposes the skein relation identifying the two sides of (1) considered as a local move $D\sim{}AD_\infty+A^{-1}D_0$ and identifies any disjoint union with an unknot, $D\coprod{}U$ with $d\cdot{}D$ where $U$ is the diagram of the unknot consisting of a single loop. The quotient is one-dimensional, any link diagram will be equivalent to some multiple of $U$, this multiple being the Kauffman bracket. This is a regular isotopy invariant (invariant under Reidemeister moves II, III) and its renormalisation
\begin{equation}\label{eq:Jonespoly}
V_L=(-A^3)^{-w(D)}\langle{}D\rangle
\end{equation}
by the writhe $w(D)$ is an ambient isotopy invariant, the (one-variable) Jones polynomial $V_L$ of $L$ \cite{Jones}. For knots, $V_L$ is a polynomial in $t,t^{-1}$ where $t=A^{-4}$.
Recall that the writhe of a diagram is defined as the sum over all crossings of the the sign of the crossing (determined by the relative orientations of the segments on $D$ involved in the crossing); we use the sign convention shown below.
\[
\includegraphics[width=.2\textwidth]{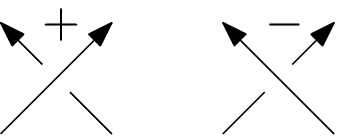}
\]

\section{Calculus of $(2,2)$-tangles}
Let $\T$ denote the monoid of framed unoriented $(2,2)$-tangles considered up to ambient isotopy equivalence, under vertical composition, or equivalently (using blackboard framing) of diagrams of $(2,2)$-tangles up to Reidemeister moves II and III. The identity is the tangle $T_\infty$ (see below).
 Then $R\T$ defines an associative unital algebra over $R$, whose elements are finite linear combinations of $(2,2)$-tangles with coefficients in $R$. Call a tangle {\it loopless} if every component contains two  boundary points, that is, it is an embedding of the disjoint union of two intervals in $\R^2\times[0,1]$.

Tangles can be manipulated in various ways to produce new tangles. Elements of $\T$ can be rotated in the plane of the diagram ($T$ becomes $\vec{T}$ under clockwise rotation through $\frac\pi2$) as well as combined side-by-side ($T$ and $T'$ combine to give $T+T'$). They can also be closed horizontally or vertically to obtain (unoriented) link diagrams; we denote these by $T^N$ (numerator closure) and $T^D$ (denominator closure) respectively.
\[
\includegraphics[width=.9\textwidth]{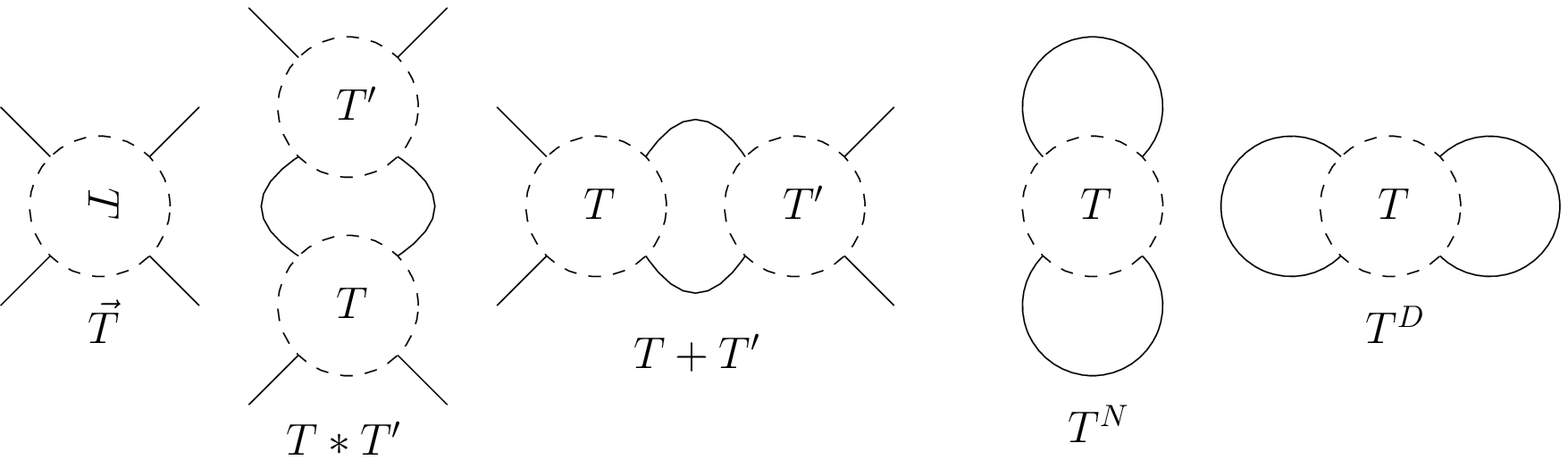}
\]
These operations are related, for example $\overset{\longrightarrow}{(T*T')}=\vec{T}+\vec{T'}$ while $(\vec{T})^N=T^D$. The operation $+$ also makes $\T$ into a monoid (a different structure from $*$); the identity here is $T_0$,
\[
\includegraphics[width=.3\textwidth]{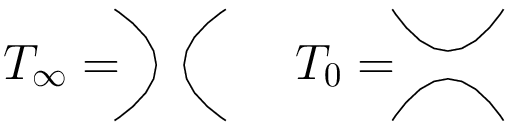}
\]
{\bf Tangle type.} Any $(2,2)$-tangle $T$ provides a subdivision of the four boundary points into two pairs, via the connections it provides (ignoring which paths pass over or under). There are three possible subdivisions of four points into two pairs and we define the {\it tangle type} of $T$ accordingly, to be 0, 1 or $\infty$ as in the diagram below. We denote tangle type by $\iota(T)\in\{0,1,\infty\}$.
\[
\includegraphics[width=.3\textwidth]{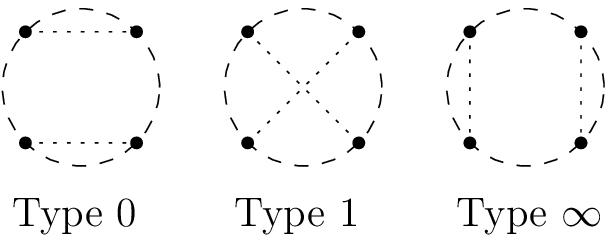}
\]
Under rotation by $\frac{\pi}2$, types 0 and $\infty$ interchange while type 1 stays fixed, so we write $\iota(\vec{T})=\iota(T)^{-1}$. Under the tangle operation $+$, the tangle types add, where addition is modulo 2 enhanced by the rule $x+\infty=\infty+x=\infty$ for all $x$,
\[\iota(T+T')=\iota(T)+\iota(T')\]
The numerator closure $T^N$ of a loopless tangle $T$  will be a knot so long as $\iota(T)\not=0$.

\medskip\noindent{\bf Writhe vector.} The definition of writhe uses an orientation on the link diagram. In the case of a knot, the writhe is independent of the choice of orientation while this is not true for links. For any loopless $(2,2)$-tangle $T$, the numerator closure $T^N$ is a knot when $\iota(T)\not=0$ and is a 2-component link otherwise, while the denominator closure $T^D$ is a knot when $\iota(T)\not=\infty$ and is a 2-component link otherwise. Define $T^N$, $T^D$ as oriented link diagrams by placing orientations as indicated, according to the tangle type of $T$.
\[
\includegraphics[width=.6\textwidth]{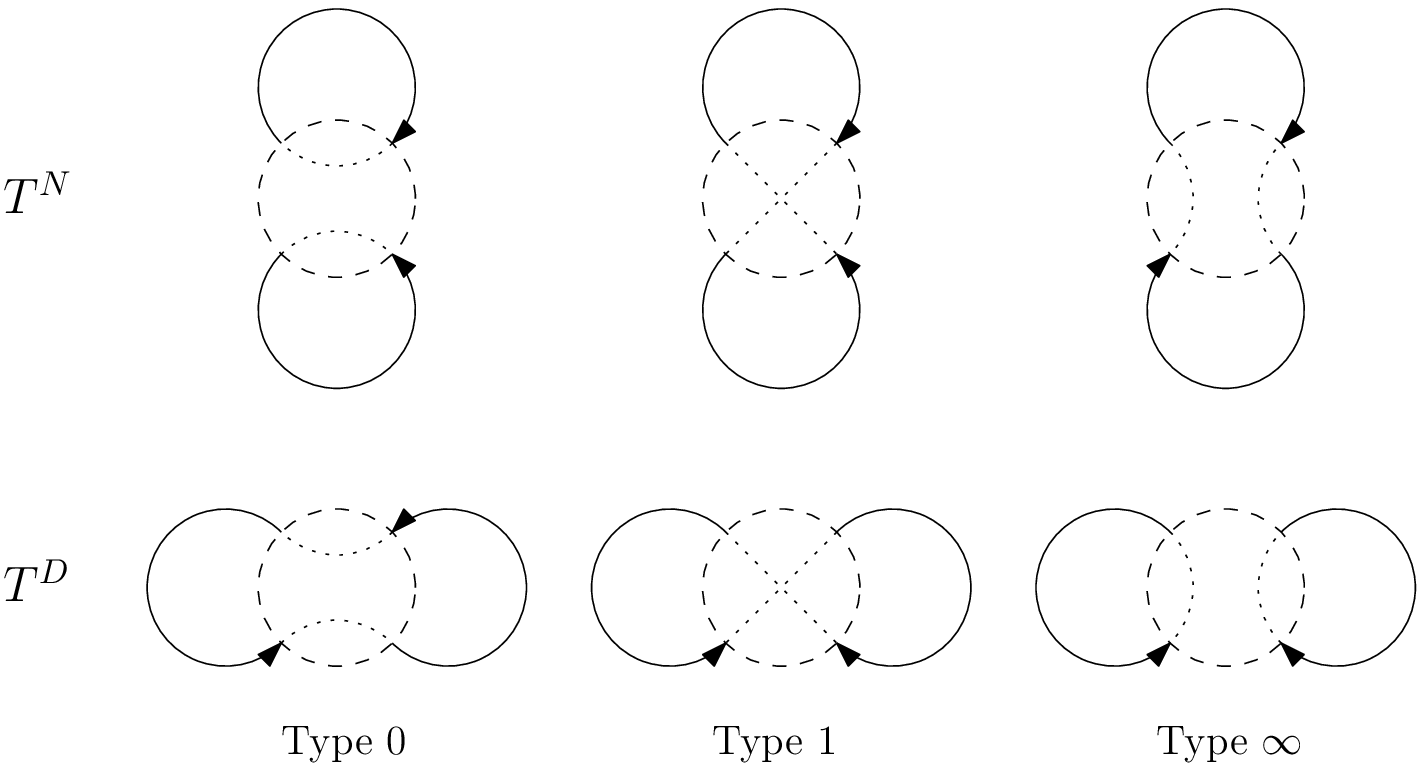}
\]
Define the {\it writhe vector} of $T$ to be $\bfw(T)=\begin{pmatrix}w(T^N)\cr{}w(T^D)\cr\end{pmatrix}$. The operations of closure introduce no additional crossings and the orientations have been chosen so that in every case contributions to $w(T^N)$ and $w(T^D)$  from crossings involving both `strands' of $T$ will be opposite. Under the operations $\ \vec{}\ $ and $+$ we have
\begin{equation}\label{eq:writherot}
\bfw(\vec{T})=\begin{pmatrix}0&1\cr1&0\cr\end{pmatrix}\bfw(T)
\end{equation}
while $T+T'$ is loopless assuming that $T,T'$ are loopless and not both of tangle type $\infty$,
\begin{equation}\label{eq:writhesum}
\bfw(T+T')=\left\{
\begin{array}{ll}
\bfw(T)+\bfw(T')&\hbox{if $\iota(T)\not=\infty$}\cr
\begin{pmatrix}0&1\cr1&0\cr\end{pmatrix}^{\iota(T')}\bfw(T)+
\begin{pmatrix}0&1\cr0&1\cr\end{pmatrix}\bfw(T')
&\hbox{if $\iota(T)=\infty$}\cr
\end{array}\right.
\end{equation}
This is obtained by verifying each possible combination of types; see the figure below where, for compactness we omit the closing arcs (where no crossings occur) and only include relevant orientations on $T+T'$ considered as a subdiagram of $(T+T')^N$ or $(T+T')^D$.
\[
\includegraphics[width=.9\textwidth]{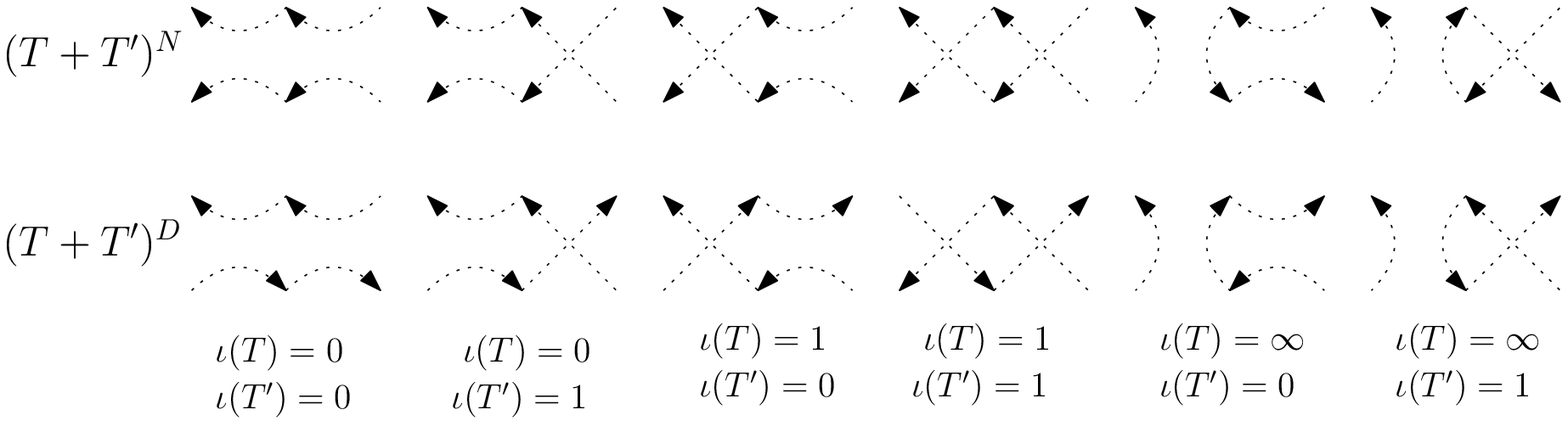}
\]

\medskip\noindent{\bf Rational tangles.} A tangle which is obtained as a sum of copies of the tangle
\[
\includegraphics[width=.15 \textwidth]{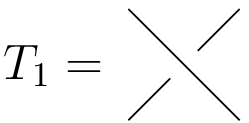}
\]
or of its rotation (denoted $T_{-1}$) through $\frac{\pi}2$, is called a {\sl horizontal twist box} $T_n$, $n\in\Z$. A tangle which can be obtained by alternating use of the operations $+T_n$ and $\ \vec{}\ $ from the tangle $T_0$ is called a {\it rational tangle}; let $\RT$ denote the set of rational tangles. By \cite{Conway} (see also \cite{KL}), a rational tangle is determined up to isotopy equivalence by the corresponding rational number; equivalently there is a well-defined bijection $r:\RT\to\Q\cup\{\infty\}$ for which
\[
r(T+T')=r(T)+r(T'),\quad{}r(\vec{T})=-r(T)^{-1}\hbox{ while }r(T_1)=1
\]
For example, $r(T_0)=0$ and $r(T_\infty)=\infty$; and so it is consistent to denote by  $T_r$ the rational tangle with fraction $r$ for arbitrary $r\in\Q\cup\{\infty\}$. The fact that the tangle type of $T_1$ is 1 combined with the transformation properties above for tangle type under $\ \vec{}\ $ and $+$ show that the tangle type of $T_r$ is the `modulo 2 evaluation of $r$', that is, if $r=\frac{p}{q}$ where $p,q$ have no common factors then
\[
\iota(T_r)=\left\{
\begin{array}{ll}
0&\hbox{if $p$ is even}\cr
1&\hbox{if $p,q$ are both odd}\cr
\infty&\hbox{if $q$ is even}\cr
\end{array}\right.
\]
In particular, for $r\in\N$, $T_r=T_1+\cdots+T_1$ ($r$ times) will be a {\it horizontal twist box} with $r$ twists while $T_{-r}$ is a horizontal twist box with $r$ twists of opposite orientation (which we will call a horizontal twist box with $-r$ twists). For $r\in\Q$ with continued fraction expansion\footnote{To fix notations, this continued fraction is the rational obtained from zero by alternately applying the operations $x\mapsto{}n_i-x$, $i=1,\ldots,k$ and reciprocation; that is, starting from zero, subtract from $n_1$, reciprocate, subtract from $n_2$, reciprocate, and so on, until the final operation of subtraction from $n_k$.} $r=n_k-\frac{1}{n_{k-1}-}\frac{1}{n_{k-2}-}\cdots\frac{1}{n_1}$   where $n_1,\ldots,n_k\in\Z$, the rational tangle $T_r$ can be obtained by $k$ iterations, starting from the twist box $T_{n_1}$, rotating, adding the twist box $T_{n_2}$, rotating and so on, up until the last addition of the twist box $T_{n_k}$. This rational tangle will also be denoted $R(n_k,n_{k-1},\ldots,n_1)$.

\begin{remark}
Note that it is more conventional to use the operation of reflection in a diagonal, as the second operation in the construction of rational tangles, rather than rotation through $\frac{\pi}2$. This is the cause of the minus signs in the continued fraction expansions that we obtain.
\end{remark}
\[
\includegraphics[width=.4\textwidth]{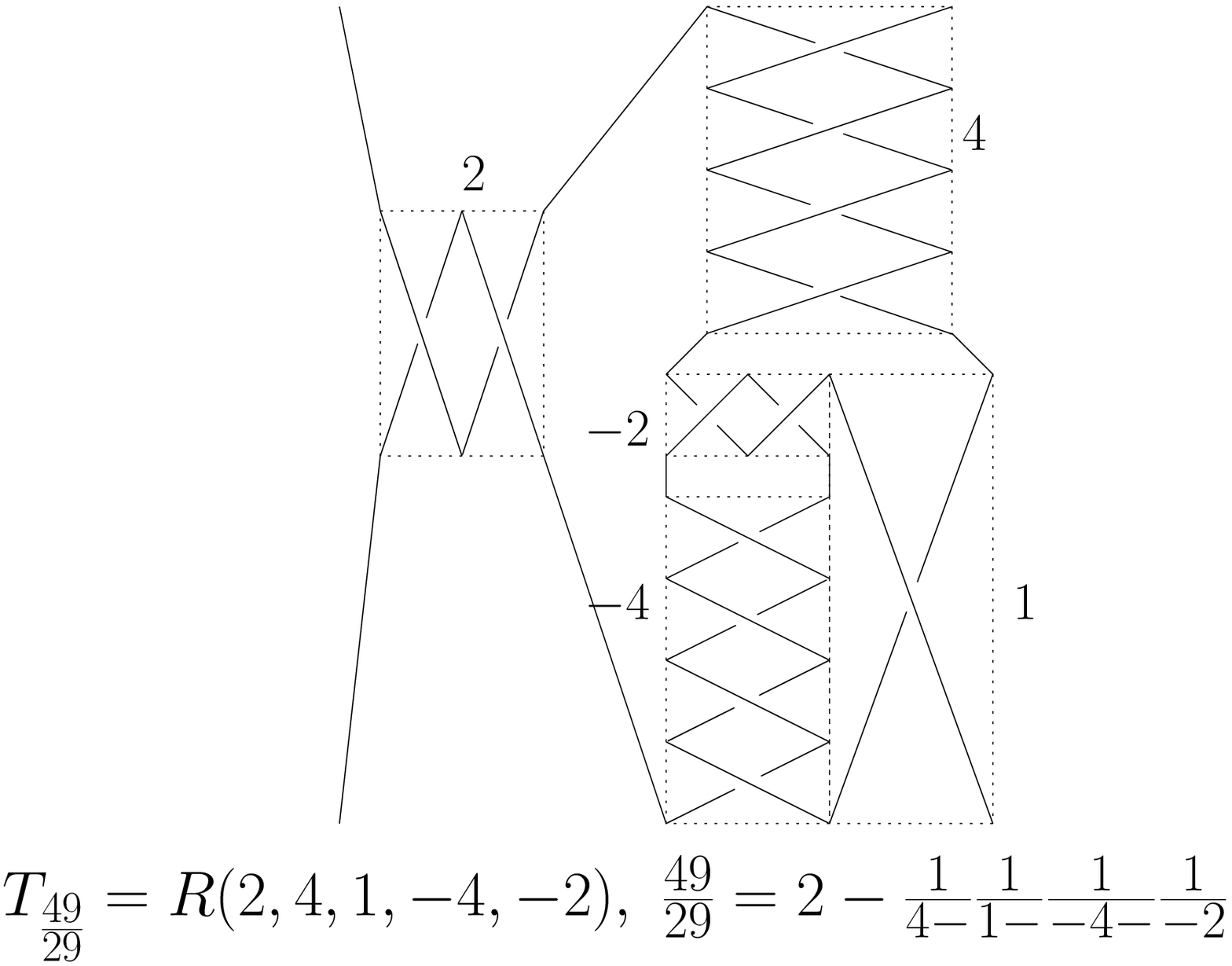}
\]

\medskip\noindent Note that the continued fraction expansion of a rational is far from unique. For example $m-\frac1{0-}\frac1{n-}\frac1x=(m+n)-\frac1x$ while $x-\frac1{0-}\frac1{-2-}\frac1{-1-}\frac1{-2}=x$.

 \begin{lemma}\label{evencontinuedfraclemma}
  Any rational in reduced form $\frac{p}{q}$ with $p$ odd and $q$ even can be written as a continued fraction of even integers and of even length, that is
  \[r=n_{k}-\frac{1}{n_{k-1}-}\frac{1}{n_{k-2}-}\cdots\frac{1}{n_1}\]
  where $k$ is even and $n_1,\ldots,n_k\in2\Z$. This expression is unique when the further condition that $n_1,\ldots,n_{k-1}\not=0$ is imposed.
 \end{lemma}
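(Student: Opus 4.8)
The plan is to analyze the continued fraction through its tail evaluations together with a parity bookkeeping modulo $2$. Writing $s_1=n_1$ and $s_j=n_j-\tfrac{1}{s_{j-1}}$ for $j\ge 2$, so that $r=s_k$, each step is the M\"obius map $s_{j-1}=\tfrac{1}{n_j-s_j}$, and I will track the residues of numerator and denominator. Call a reduced fraction of \emph{type A} if its numerator is odd and denominator even, and of \emph{type B} if numerator even and denominator odd. Assuming all $n_j\in 2\Z$, a short computation (using $\gcd(q,nq-p)=\gcd(q,p)=1$, so no reduction occurs) shows that the two types strictly alternate along $s_1,s_2,\dots$, with $s_j$ of type B for $j$ odd and of type A for $j$ even, the type-B integers being exactly the even integers and type A never being an integer. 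Since the target $r=p/q$ is of type A, this immediately forces $k$ to be even, giving the asserted parity of the length for free.

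For existence I would run the greedy algorithm that always peels off the \emph{nearest even integer}: set $s_k=r$, let $n_k$ be the even integer closest to $s_k$, put $s_{k-1}=1/(n_k-s_k)$, and repeat, stopping when some $s_j$ is itself an even integer (recorded as $n_1$). Because the relevant $s_j$ is never an odd integer, the nearest even integer is unique at each stage, so the $n_j$ are well defined and even. The quantitative point is that, writing $s_j=p/q$ in reduced form, the next fraction $s_{j-1}=q/(n_jq-p)$ is already reduced, and the nearest-even choice makes $|n_jq-p|=|q|\cdot|n_j-s_j|<|q|$ (strict, since $s_j$ is never an odd integer). Thus the absolute value of the denominator strictly decreases at every step, the process terminates at a type-B even integer, and the alternation above shows the resulting length is even.

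The subtle part, and the one I expect to be the main obstacle, is the nonvanishing condition $n_1,\dots,n_{k-1}\ne 0$ and the uniqueness it buys. The crucial invariant is that $|s_i|>1$ for all $1\le i\le k-1$. I would prove this by induction: $|s_1|=|n_1|\ge 2$, and if $|s_{i-1}|>1$ then $|s_i|\ge|n_i|-|1/s_{i-1}|>2-1=1$, where $|n_i|\ge 2$ is exactly the hypothesis $n_i\ne 0$ for $i\le k-1$. Granting this, for every $j$ with $2\le j\le k$ one has $|s_j-n_j|=|1/s_{j-1}|<1$, so $n_j$ is the unique even integer within distance $1$ of $s_j$; moreover $s_j$ is never an even integer for $j\ge 2$ (else $s_{j-1}$ would be infinite). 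Hence the greedy extraction is \emph{forced}: from $r$ it must produce $n_k$, then $s_{k-1}$, then $n_{k-1}$, and so on, halting exactly at the even integer $s_1=n_1$ (only the top entry $n_k$, the nearest even integer to $r$ itself, may vanish, which is why the hypothesis excludes only $n_1,\dots,n_{k-1}$). Since this procedure depends only on $r$, any two admissible expansions coincide, giving uniqueness; and the existence algorithm automatically maintains $|s_i|=1/|n_i-s_{i}|\cdots>1$ for $i\le k-1$ by the same estimate, so it indeed outputs nonzero interior entries.
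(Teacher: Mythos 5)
Your proof is correct and follows essentially the same route as the paper: a greedy nearest-even-integer extraction with strictly decreasing denominators for existence, parity tracking of numerator/denominator to force the even length, and the invariant that tails with nonzero even entries have absolute value strictly greater than $1$ for uniqueness. The only difference is one of detail: the paper peels off two entries per iteration and merely asserts the key ``$|s_i|>1$'' fact, whereas you prove it by induction and make explicit how it forces the expansion, so your write-up is if anything slightly more complete.
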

 \begin{proof}
 Suppose $r=\frac{p}{q}\in\Q$ has odd numerator and even denominator. It is certainly not an odd integer and therefore there exists a unique closest even integer, say $2m$. Then $2m-r\in(-1,1)$ is a rational with odd numerator (with absolute value less than $|q|$) and even denominator (still $q$). Put $s=\frac1{2m-r}$; this is a rational with even numerator $q$ and odd denominator (with absolute value less than $|q|$)  whose absolute value is strictly greater than 1 and so it cannot be an odd integer. Let $2n\not=0$ be the unique closest even integer to $s$ so that $2n-s\in(-1,1)$ is a rational (possibly zero) with even numerator and odd denominator (both of absolute value less than $|q|$). If $2n-s=0$ then $r=2m-\frac1{2n}$ and we are done. If not, then setting $x=\frac1{2n-s}$ we have $r=2m-\frac1{2n-}\frac1x$ where $x$ is a rational, $|x|>1$ with odd numerator and even denominator (both less than $|q|$) to which we can apply the same argument again with $x$ replacing $r$. This iterative process is finite since the denominators $q$ are of strictly decreasing absolute value. Uniqueness follows from the fact that a continued fraction with all non-zero even entries must have absolute value strictly greater than 1.
 \end{proof}

\medskip \noindent{\bf Kauffman bracket for tangles.} If one starts with a framed $(2,2)$-tangle and then chooses a planar representation (using blackboard framing), one can perform the same replacement (1) of crossings by a combination of smoothings as in the Kauffman bracket description of the previous section. The result will be an $R$-combination of $(2,2)$-tangle diagrams without crossings, each of which must be a disjoint union of some number of loops and one of the two basic $(2,2)$-tangles $T_0$, $T_\infty$.
Identifying $D\coprod{}{}U\sim{}d\cdot{}D$ results in an evaluation of $D$ as an $R$-linear combination of $T_0$, $T_\infty$, which is known as the Kauffman bracket for $(2,2)$-tangles, now a vector-valued regular isotopy invariant, see \cite{Amankwah}.

Equivalently, consider $R\T$ as an $R$-module and its submodule
 \[
 I=\Big\langle{}D-AD_\infty-A^{-1}D_0,\>D\coprod{}U-d\cdot{}D\Big\rangle
 \]
 where $D$ runs over all $(2,2)$-tangle diagrams with local smoothing moves as in the last section. The quotient is the relevant skein module, which is two-dimensional, generated by $T_0$, $T_\infty$, and the map
  \[R\T\longrightarrow{}R\T/I\cong{}R^2\]  is the Kauffman bracket of $(2,2)$-tangles, again denoted $\langle\cdot\rangle$. Thus for a $(2,2)$-tangle $T$, its Kauffman bracket $\langle{}T\rangle=\begin{pmatrix}f\cr{}g\cr\end{pmatrix}$ where $f,g\in{}R$ are such that $T\sim{}fT_0+gT_\infty$.

  By applying the corresponding operations to the equivalent tangle, it follows immediately that $\vec{T}\sim{}fT_\infty+gT_0$, $T^N\sim(df+g)U$ and $T^D\sim(f+dg)U$ so that
\begin{equation}\label{eq:Kauffmanclosure}
\langle\vec{T}\rangle=\begin{pmatrix}g\cr{}f\cr\end{pmatrix},\quad
\langle{}T^N\rangle=d\cdot{}f+g,\quad
\langle{}T^D\rangle=f+d\cdot{}g
\end{equation}
while when combined with another tangle $T'$ for which $\langle{}T'\rangle=\begin{pmatrix}f'\cr{}g'\cr\end{pmatrix}$,
\begin{equation}\label{eq:Kauffmanops}
\langle{}T*T'\rangle=\begin{pmatrix}dff'+fg'+gf'\cr{}gg'\cr\end{pmatrix},\quad
\langle{}T+T'\rangle=\begin{pmatrix}ff'\cr{}fg'+f'g+dgg'\cr\end{pmatrix}
\end{equation}

\medskip\begin{lemma}\label{bracketvectorlemma} The Kauffman bracket vector of rational tangle diagrams is given by
\[
\langle{}R(n_k,n_{k-1},\ldots,n_1)\rangle
=(iA^{-1})^{\sum\limits_sn_s}(-i)^k
\begin{pmatrix}0&1\cr1&0\cr\end{pmatrix}\bfB_{n_k}\ldots{}\bfB_{n_1}
\begin{pmatrix}1\cr0\cr\end{pmatrix}
\]
where $n_1,\ldots,n_k\in\Z$, $u=-iA^2$, $[n]=\frac{u^n-u^{-n}}{u-u^{-1}}$  and $\bfB_n\equiv\begin{pmatrix}[n]&iu^{-n}\cr{}iu^n&0\cr\end{pmatrix}$.
 \end{lemma}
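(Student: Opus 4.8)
The plan is to follow the Kauffman bracket vector through the inductive construction of the rational tangle, reducing the whole statement to one matrix identity. First I would record the effect of the two elementary building operations. Resolving the single positive crossing with the skein relation \eqref{eq:skeinrelation} gives $\langle T_1\rangle=\begin{pmatrix}A\\A^{-1}\end{pmatrix}$, and since $\langle T_0\rangle=\begin{pmatrix}1\\0\end{pmatrix}$, the addition rule \eqref{eq:Kauffmanops} together with $d=-A^2-A^{-2}$ shows that appending one further positive twist (the map $X\mapsto X+T_1$) acts by left multiplication by $M:=\aas$; by associativity of $+$ it follows that $\langle T_n\rangle=M^n\begin{pmatrix}1\\0\end{pmatrix}$ for all $n\in\Z$, with $M^{-1}$ recording the addition of $T_{-1}$. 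The rotation rule in \eqref{eq:Kauffmanclosure} shows that $\vec{\ }$ acts by left multiplication by $J:=\zooz$, and we note $J^2=\bfI$.

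Next I would unwind the recursion $R(n_k,\ldots,n_1)=\vec{R(n_{k-1},\ldots,n_1)}+T_{n_k}$ that is built into the definition of rational tangles. Adding the twist box $T_{n_k}$ is $n_k$ applications of $X\mapsto X+T_1$, hence left multiplication by $M^{n_k}$, while the intervening rotation contributes one factor $J$. Iterating from $\langle R(n_1)\rangle=M^{n_1}\begin{pmatrix}1\\0\end{pmatrix}$ then yields
\[
\langle R(n_k,\ldots,n_1)\rangle=M^{n_k}\,J\,M^{n_{k-1}}\,J\cdots J\,M^{n_1}\begin{pmatrix}1\\0\end{pmatrix},
\]
with exactly $k-1$ interior factors of $J$.

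The heart of the proof, and the step I expect to be the main obstacle, is the matrix identity
\[
M^n=(iA^{-1})^n(-i)\,J\,\bfB_n\qquad(n\in\Z).
\]
I would prove it by induction on $n$; the cases $n=0,1$ are checked directly from $[0]=0$, $[1]=1$, $J^2=\bfI$ and $u=-iA^2$. After cancelling the scalar prefactors and one factor of $J$, the inductive step reduces to the recursion $\bfB_{n+1}=-i\,\bfB_1 J\bfB_n$; expanding the right-hand side makes its $(1,1)$ entry precisely the three-term recursion $[n+1]=(u+u^{-1})[n]-[n-1]$ for the quantum integers, the remaining three entries matching on inspection, and the downward induction to negative $n$ is identical with $\bfB_{-1}$ in place of $\bfB_1$. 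The only real delicacy throughout is the bookkeeping of the interlocking scalars $i$, $A$ and $u=-iA^2$, and it is exactly this that forces the normalising factor $(iA^{-1})^{\sum_s n_s}(-i)^k$.

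Finally I would substitute the identity into the displayed word. Each block $M^{n_s}$ becomes $(iA^{-1})^{n_s}(-i)\,J\bfB_{n_s}$; collecting the scalars produces $(iA^{-1})^{\sum_s n_s}(-i)^k$, while every interior pair $J\cdot J$ created between consecutive blocks collapses by $J^2=\bfI$, leaving a single leading $J$ followed by $\bfB_{n_k}\bfB_{n_{k-1}}\cdots\bfB_{n_1}$ acting on $\begin{pmatrix}1\\0\end{pmatrix}$. This is exactly the asserted formula.
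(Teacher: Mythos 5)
Your proposal is correct and follows essentially the same route as the paper's proof: the paper likewise tracks the bracket vector through the twist-and-rotate construction, with $+T_1$ acting as left multiplication by $\begin{pmatrix}A&0\cr A^{-1}&-A^{-3}\end{pmatrix}=iA^{-1}\begin{pmatrix}u&0\cr -i&u^{-1}\end{pmatrix}$ and rotation by the swap matrix $J$, and then collapses the word into $\bfB_{n_k}\cdots\bfB_{n_1}$ using exactly your identity (in the paper's form $J\cdot\begin{pmatrix}u^{n}&0\cr -i[n]&u^{-n}\end{pmatrix}=-i\bfB_{n}$). The only cosmetic differences are that the paper gets $M^n=(iA^{-1})^n\begin{pmatrix}u^n&0\cr -i[n]&u^{-n}\end{pmatrix}$ by directly powering the lower-triangular matrix rather than by your induction on $n$ (where, incidentally, the $(1,1)$ entry of your recursion $\bfB_{n+1}=-i\,\bfB_1J\bfB_n$ yields the two-term identity $[n+1]=u^n+u^{-1}[n]$ rather than the three-term recursion you quote, though both are immediate from the definition of $[n]$), and that the paper starts its iteration from $T_\infty$ instead of $T_{n_1}$, which is equivalent since $J\begin{pmatrix}0\cr 1\end{pmatrix}=\begin{pmatrix}1\cr 0\end{pmatrix}$.
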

\begin{remark}\label{singqnumber} In the cases $u=\pm1$, $[n]$ must be replaced by its limiting values, $n$ for $u=1$ and $(-1)^{n-1}n$ for $u=-1$.
\end{remark}
\begin{proof}
By (\ref{eq:skeinrelation}), $\langle{}T_1\rangle=\begin{pmatrix}A\cr{}A^{-1}\cr\end{pmatrix}$ and so by (\ref{eq:Kauffmanops}),
\[
\langle{}T+T_1\rangle
\!=\!\begin{pmatrix}Af\cr{}A^{-1}f\!+\!Ag\!+\!dA^{-1}g\cr\end{pmatrix}
\!=\!\begin{pmatrix}A&0\cr{}A^{-1}&-A^{-3}\cr\end{pmatrix}\!\langle{}T\rangle
\!=\!iA^{-1}\!\begin{pmatrix}u&0\cr{}-i&u^{-1}\cr\end{pmatrix}\!\langle{}T\rangle
\]
Applying this $n$ times gives
\[
\langle{}T+T_n\rangle
=(iA^{-1})^n\begin{pmatrix}u&0\cr{}-i&u^{-1}\cr\end{pmatrix}^n\langle{}T\rangle
=(iA^{-1})^n\begin{pmatrix}u^n&0\cr{}-i[n]&u^{-n}\cr\end{pmatrix}\langle{}T\rangle
\]
Meanwhile by (\ref{eq:Kauffmanclosure}),
$\langle\vec{T}\rangle=\begin{pmatrix}1&0\cr0&1\cr\end{pmatrix}\langle{}T\rangle$.
Since $T_0$ is the identity with respect to $+$ and is obtained by rotating $T_\infty$, thus $R(n_k,n_{k-1},\ldots,n_1)$ may be obtained by starting with $T_\infty$ and making $k$ iterations, each step being first a rotation and then adding $T_n$, where in turn $n=n_1,\ldots,n_k$. By definition
$\langle{}T_\infty\rangle=\begin{pmatrix}0\cr1\cr\end{pmatrix}$ and so it follows that
\begin{align*}
&\langle{}R(n_k,n_{k-1},\ldots,n_1)\rangle\cr
&=(iA^{-1})^{n_1+\cdots+n_k}
\begin{pmatrix}u^{n_k}&0\cr{}-i[n_k]&u^{-n_k}\cr\end{pmatrix}
\!\begin{pmatrix}0&1\cr1&0\cr\end{pmatrix}\cdots
\begin{pmatrix}u^n_1&0\cr{}-i[n_1]&u^{-n_1}\cr\end{pmatrix}
\!\begin{pmatrix}0&1\cr1&0\cr\end{pmatrix}
\!\begin{pmatrix}0\cr1\cr\end{pmatrix}\cr
&=(iA^{-1})^{\sum\limits_sn_s}\begin{pmatrix}0&1\cr1&0\cr\end{pmatrix}
\left(\prod\limits_{s=k}^1\begin{pmatrix}0&1\cr1&0\cr\end{pmatrix}
\begin{pmatrix}u^{n_s}&0\cr{}-i[n_s]&u^{-n_s}\cr\end{pmatrix}\right)
\begin{pmatrix}1\cr0\cr\end{pmatrix}
\end{align*}
The term inside the product can be identified as $-i\bfB_{n_s}$ and the lemma follows.
\end{proof}

\section{The Jones polynomial of a rational knot}
A {\it rational knot} is a knot which can be expressed as the closure of a rational tangle. We will use numerator closure and write $K_r\equiv(T_r)^N$ to denote the numerator closure of the rational tangle $T_r$ for $r\in\Q\cup\{\infty\}$. This will be a knot so long as $\iota(T_r)\not=0$, that is, so long as $r$ has odd numerator.

\begin{theorem}[Schubert's Theorem \cite{Schubert}, \cite{KL2}] Two rational knots $K_{\frac{p}{q}}$ and $K_{\frac{p'}{q'}}$ corresponding to reduced fractions $\frac{p}{q}$ and $\frac{p'}{q'}$ (with $p,p'>0$) are ambient isotopic if, and only if, $p=p'$ and $q\equiv{}q'^{\pm1}$ (mod $p$).
\end{theorem}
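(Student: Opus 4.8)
The plan is to prove the two implications by rather different means: the ``if'' direction constructively, by exhibiting explicit isotopies between the relevant rational knot diagrams, and the ``only if'' direction by passing to the double branched cover and invoking the classification of lens spaces. Throughout I write $K_{p/q}=(T_{p/q})^N$ with $p$ odd and $p>0$, and I make free use of the fraction calculus $r(T+T')=r(T)+r(T')$ and $r(\vec{T})=-r(T)^{-1}$ established above.

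For the ``if'' direction, since already $p=p'$, it suffices to show that on residues coprime to $p$ the relation $q\sim q'^{\pm1}\pmod p$ is generated by two moves, and that each is realised by an ambient isotopy of the closed diagram. The first move is $q\mapsto q+p$, i.e. $K_{p/q}=K_{p/(q+p)}$; in fraction terms this inserts one integer twist into a suitable rotated copy of the tangle, and I would check that after numerator closure the extra twist is removed by sliding it once around a closure arc (a short Reidemeister II/III sequence), so that $K_{p/q}$ depends only on $q\bmod p$. The second move is the inversion $q\mapsto q^{-1}\pmod p$, realised by a $\pi$-rotation of the closed diagram that exchanges the two bridges; tracking its effect on the associated fraction via the fraction calculus yields $q\mapsto q^{-1}$. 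Since the orbit of $q$ under ``add $p$'' and ``invert mod $p$'' is exactly $\{q':q'\equiv q^{\pm1}\pmod p\}$, these two moves suffice. The delicate point here is a sign/chirality check: one must verify that the rotation produces $q^{-1}$ and not $-q^{-1}$, since the latter would be the residue of the \emph{mirror} knot $K_{p/(p-q)}$, which is in general not ambient isotopic to $K_{p/q}$.

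For the ``only if'' direction the key input is that the double branched cover of $S^3$ along $K_{p/q}$, with its canonical orientation, is the lens space $L(p,q)$. I would establish this by the Montesinos construction: the double cover of a $3$-ball branched over a rational (hence trivial) tangle is a solid torus, and numerator closure glues two such solid tori along the double cover of the Conway sphere, the gluing slope being read off from the continued fraction of $p/q$ so as to identify the result with $L(p,q)$. Because an ambient isotopy of knots in the oriented $S^3$ is orientation-preserving and lifts to an orientation-preserving homeomorphism of double branched covers, $K_{p/q}\cong K_{p'/q'}$ forces $L(p,q)\cong_{+}L(p',q')$. The classification of lens spaces (Reidemeister--Franz, via Reidemeister torsion) then gives $p=p'$, since $p=|H_1(L(p,q))|$ equals the knot determinant, together with $q\equiv q'^{\pm1}\pmod p$; this is precisely the orientation-preserving case, which is why no spurious sign $\pm q'^{\pm1}$ appears.

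The main obstacle in this route is not any knot-theoretic bookkeeping but the importation of two deep facts: the classification of lens spaces and the branched-cover slope computation. A purely combinatorial alternative, following Kauffman--Lambropoulou \cite{KL2}, would instead exploit that rational knots are alternating and that their reduced alternating diagrams are unique up to flypes, reading $p$ and $q\bmod p$ directly off the flype-equivalence class; there the hard input is the Tait flyping theorem for alternating links. In either approach the genuinely rational-knot-specific content (the two generating moves, and the branched-cover computation) is elementary, and the difficulty is concentrated in the $3$-manifold or alternating-diagram classification being quoted, which is why the statement is cited here from \cite{Schubert} and \cite{KL2} rather than reproved.
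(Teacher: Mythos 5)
The paper does not prove this statement at all: Schubert's theorem is quoted as classical background, with the proof deferred to \cite{Schubert} and \cite{KL2}, so there is no internal argument to compare yours against. Your outline is, however, a faithful sketch of the standard proofs found in precisely those references: the ``only if'' direction via the double branched cover being $L(p,q)$ plus the Reidemeister--Franz classification is essentially Schubert's route in modern dress, while the flype-based combinatorial alternative you mention at the end is the route of Kauffman--Lambropoulou. The structural claims you make are correct: bottom twists are absorbed by numerator closure (giving $q\mapsto q+p$); the palindrome/rotation symmetry gives inversion modulo $p$, and the sign issue you rightly flag is resolvable, since reversing a length-$n$ continued fraction sends $q$ to $(-1)^{n+1}q^{-1}\pmod{p}$ and one may always normalize the expansion to odd length; these two moves do generate the relation $q'\equiv q^{\pm1}\pmod p$; and ambient isotopy lifts to an \emph{orientation-preserving} homeomorphism of covers, so only the orientation-preserving lens space classification is needed and no spurious sign $\pm q^{\pm1}$ enters. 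The honest caveat, which you yourself state, is that this is an outline concentrating all difficulty in quoted deep theorems (lens space classification, or the flyping theorem); that is unavoidable, and it is exactly why the paper cites the result rather than reproving it. Note also that the only use the paper makes of the theorem in the forward direction --- replacing $q$ by $q$ plus a multiple of $p$ to make it even --- is the elementary half of your argument, so your division of the proof into a soft constructive half and a hard quoted half accurately reflects how the theorem functions in the paper.
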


\medskip\noindent To calculate the Jones polynomial of $K_r$, pick a continued fraction expansion of $r$, say
\[r=\frac{p}{q}=n_k-\frac{1}{n_{k-1}-}\frac{1}{n_{k-2}-}\cdots\frac{1}{n_1}\]
where $n_1,\ldots,n_k\in\Z$, so that $T_r\sim{}R(n_k,\ldots,n_1)$ defines a tangle diagram for $T_r$. The Kauffman bracket of the knot diagram $R(n_k,\ldots,n_1)^N$ of $K_r$ is obtained by combining Lemma \ref{bracketvectorlemma} with $\langle{}T^N\rangle=\begin{pmatrix}d&1\cr\end{pmatrix}\langle{}T\rangle$ from (\ref{eq:Kauffmanclosure}), to give
\begin{equation}\label{eq:KauffmanQknot}
\langle{}R(n_k,n_{k-1},\ldots,n_1)^N\rangle
=(iA^{-1})^{\sum\limits_sn_s}(-i)^k
\begin{pmatrix}1&d\cr\end{pmatrix}\bfB_{n_k}\ldots{}\bfB_{n_1}
\begin{pmatrix}1\cr0\cr\end{pmatrix}
\end{equation}
By (\ref{eq:Jonespoly}), the Jones polynomial $V_L(K_r)$ is obtained from this Kauffman bracket evaluation by normalising using the writhe of the knot diagram, namely the first component of $\bfw(R(n_k,\ldots,n_1))$. This writhe vector can be computed iteratively from (\ref{eq:writherot}) and (\ref{eq:writhesum}) starting from the writhe vector of a horizontal twist box $\bfw(T_n)=\begin{pmatrix}n\cr-n\cr\end{pmatrix}$. An explicit formula is given in \cite{R} but we will not need it here. Writing everything as a function of $u$, noting that $d=-A^2-A^{-2}=i(u^{-1}-u)$ we get the following lemma.

\medskip
\begin{theorem}\label{Jonesrationalformula} The Jones polynomial of the knot obtained by numerator closure of the rational tangle $R(n_k,\ldots,n_1)$ is
 \[
V_{K_r}(t)=(-1)^{\frac14w^N+\frac14\sum_sn_s-\frac{k}2}
u^{-\frac32w^N-\frac12\sum_sn_s}
\begin{pmatrix}1&d\cr\end{pmatrix}\bfB_{n_k}\ldots{}\bfB_{n_1}
\begin{pmatrix}1\cr0\cr\end{pmatrix}
\]
where $w^N$ is the writhe of the numerator closure of the knot diagram while  $t=-u^{-2}$, $d=i(u^{-1}-u)$, $[n]=\frac{u^n-u^{-n}}{u-u^{-1}}$  and $\bfB_n\equiv\begin{pmatrix}[n]&iu^{-n}\cr{}iu^n&0\cr\end{pmatrix}$. The exponents of $-1$ and $u$ in the normalisation are integral.
\end{theorem}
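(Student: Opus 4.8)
The plan is to combine the Kauffman bracket (\ref{eq:KauffmanQknot}) with the writhe normalisation (\ref{eq:Jonespoly}), $V_{K_r}=(-A^3)^{-w^N}\langle{}R(n_k,\ldots,n_1)^N\rangle$, and then to pass to the variable $u$ via $A^2=iu$. Writing $N=\sum_s n_s$ and $P(u)=\begin{pmatrix}1&d\end{pmatrix}\bfB_{n_k}\ldots\bfB_{n_1}\oz$, direct substitution gives $V_{K_r}=(-1)^{-w^N}i^{N}(-i)^{k}A^{-3w^N-N}P(u)$. Replacing $A^{-3w^N-N}$ by $(iu)^{(-3w^N-N)/2}$ and splitting off the power of $u$ leaves the asserted factor $u^{-\frac32 w^N-\frac12 N}$ multiplied by a fourth root of unity, and a finite bookkeeping of the constants $(-1)^{-w^N}$, $i^N$, $(-i)^k$ identifies that root of unity with $(-1)^{\frac14 w^N+\frac14 N-\frac k2}$. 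The essential point is that this manipulation is legitimate, and the leftover constant is a genuine sign, precisely when both exponents are integers; so the derivation of the formula and the integrality claim are the same problem, and I would settle integrality first and read off the formula afterwards.

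For the exponent of $u$, which is integral exactly when $3w^N+N$ (equivalently $w^N+N$) is even, I would use a crossing count. The writhe is a signed sum over crossings, so $w^N\equiv c\pmod2$, where $c$ is the number of crossings of the diagram $R(n_k,\ldots,n_1)^N$; since each twist box $T_{n_s}$ contributes $|n_s|$ crossings while the rotations and the closure add none, $c=\sum_s|n_s|\equiv\sum_s n_s=N\pmod2$. Hence $w^N\equiv N\pmod2$, so $3w^N+N\equiv 2N\equiv0$ and the exponent of $u$ is an integer; this also licenses the substitution $A^{-3w^N-N}=(iu)^{(-3w^N-N)/2}$ used above.

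For the exponent of $-1$ I need the sharper congruence $w^N+N\equiv 2k\pmod4$, and here I would exploit a parity symmetry of $P$. Conjugating by $J=\begin{pmatrix}i&0\cr0&-i\end{pmatrix}$, for which $J^2=-\bfI$, a short computation gives
\[
\bfB_n(-u)=(-1)^n\,J\,\bfB_n(u)\,J,\qquad \begin{pmatrix}1&-d\end{pmatrix}J=i\begin{pmatrix}1&d\end{pmatrix},\qquad J\,\oz=i\,\oz,
\]
where one uses $d(-u)=-d$. Feeding these into $P(-u)$, the $k-1$ interior occurrences of $J^2=-\bfI$ collapse and yield $P(-u)=(-1)^{N+k}P(u)$, so every power of $u$ appearing in $P(u)$ is $\equiv N+k\pmod2$. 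On the other hand $V_{K_r}$, being the Jones polynomial of a knot, is a Laurent polynomial in $t=-u^{-2}$ (as noted after (\ref{eq:Jonespoly})), hence involves only even powers of $u$. Since $V_{K_r}$ equals a nonzero constant times $u^{b}P(u)$ with $b=-\tfrac32 w^N-\tfrac12 N$ already known to be an integer, all powers of $u$ that occur are $\equiv b+N+k\pmod2$ and must be even, forcing $b\equiv N+k\pmod2$. Writing $b=-\tfrac{w^N+N}2-w^N$, this reads $\tfrac{w^N+N}2\equiv N+k+w^N\pmod2$, and the crossing-count relation $w^N\equiv N$ collapses the right-hand side to $k$; thus $\tfrac{w^N+N}2\equiv k\pmod2$, i.e.\ $w^N+N\equiv2k\pmod4$, which is exactly integrality of the exponent of $-1$.

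The crux is this final mod-$4$ congruence: the mod-$2$ facts drop out of counting crossings, but the mod-$4$ refinement cannot be seen that way, since it secretly encodes the number of negative crossings. The device that supplies the missing information is the symmetry $P(-u)=(-1)^{N+k}P(u)$, which upgrades the mod-$2$ input $w^N\equiv N$ to the needed mod-$4$ conclusion once combined with the fact that a knot's Jones polynomial lies in $\Z[t^{\pm1}]$ rather than merely $\Z[t^{\pm1/2}]$. I expect the one step requiring genuine care to be verifying the conjugation identity $\bfB_n(-u)=(-1)^n J\,\bfB_n(u)\,J$ and tracking the collapse of the interior signs; everything else is bookkeeping of roots of unity.
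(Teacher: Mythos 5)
Your proposal is correct, and its first half is necessarily the same as the paper's: the paper likewise obtains the formula by combining (\ref{eq:KauffmanQknot}) with the writhe normalisation (\ref{eq:Jonespoly}) and substituting $A^2=iu$, $d=i(u^{-1}-u)$; your bookkeeping of the root of unity matches what that substitution gives. Where you genuinely diverge is the integrality claim, which the paper disposes of in a one-line Remark: since the matrix product $P=\begin{pmatrix}1&d\end{pmatrix}\bfB_{n_k}\cdots\bfB_{n_1}\oz$ lies in $\Z[u,u^{-1}]$ while $V_{K_r}\in\Z[t,t^{-1}]$, the prefactor $V_{K_r}/P$ --- which on formal grounds is a fourth root of unity times an integer power of $A$ --- lies in $\Q(u)$, and such a monomial can only be $\pm1$ times an integer power of $u$; both integralities drop out simultaneously with no further input. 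You instead prove two concrete congruences: $w^N\equiv N\pmod 2$ by counting crossings (settling the $u$-exponent), and $w^N+N\equiv 2k\pmod 4$ via the conjugation symmetry $P(-u)=(-1)^{N+k}P(u)$ combined with $V_{K_r}\in\Z[t^{\pm1}]$ (settling the sign exponent). I checked the identity $\bfB_n(-u)=(-1)^n J\bfB_n(u)J$ with $J=\begin{pmatrix}i&0\cr0&-i\end{pmatrix}$, the boundary relations, and the collapse of the $k-1$ interior factors $J^2=-\bfI$; all are correct, as is the final arithmetic reducing the mod-4 statement to integrality of the exponent of $-1$. What your route buys is structural information the paper's argument never exposes: an explicit mod-4 writhe congruence for these diagrams, and the fact that every $u$-exponent occurring in the matrix product has parity $N+k$. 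What the paper's route buys is brevity and economy of hypotheses --- it needs neither the crossing count nor any new matrix identity, since the single input $V_K\in\Z[t^{\pm1}]$ (which you also invoke) already forces both exponents to be integers at once. One pedantic point common to both arguments: one must note $P\not\equiv0$ (e.g.\ because $V_{K_r}(1)=1$) so that talk of the powers of $u$ occurring in $V_{K_r}$ is not vacuous; you assume this implicitly.
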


\begin{remark}
Integrality of the exponents of $-1,u$ follows from the fact that the matrix product is a polynomial in $u,u^{-1}$ with integral coefficients, while the Jones polynomial of a knot is always a polynomial in $t,t^{-1}$ with integer coefficients.
\end{remark}

\begin{remark} Since $V_L$ is a topological invariant, the expression given above for $V_{K_r}(t)$ should depend on the integers $k$,$n_1,\ldots,n_k$ only via the continued fraction $r=\frac{p}{q}$. However there is no known direct formula for it in terms of $p,q$.
\end{remark}

\begin{remark} When $A=\sqrt{i}$, then $u=1$, $t=-1$ and  the Jones polynomial evaluates (up to sign) to the determinant of the knot
\[
V_{K_r}(-1)=\pm\begin{pmatrix}1&0\cr\end{pmatrix}\bfB_{n_k}\ldots{}\bfB_{n_1}
\begin{pmatrix}1\cr0\cr\end{pmatrix}
\]
where $\bfB_n\equiv\begin{pmatrix}n&i\cr{}i&0\cr\end{pmatrix}$, see Remark \ref{singqnumber}. The product  $\bfB_{n_k}\ldots\bfB_{n_1}\begin{pmatrix}1\cr0\end{pmatrix}$ evaluates via the continued fraction to $\begin{pmatrix}p\cr{}iq\end{pmatrix}$ and so we get $|V_{K_r}(-1)|=p$, as expected.
\end{remark}

Any rational knot can be written as $K_{\frac{p}{q}}$ with $p$ odd. By Schubert's theorem, changing $q$ by a multiple of $p$ does not alter the knot and so  $q$ may be chosen to be even. By Lemma \ref{evencontinuedfraclemma}, we can now choose a continued fraction expansion of even length in even integers, that is, assume that $k$ and $n_1,\ldots,n_k$ are even. In this case, the tangle type of the intermediate rational tangles $R(n_s,\ldots,n_1)$ ($s=1,\ldots,k$) alternate between 0 ($s$ odd) and $\infty$ ($s$ even). Setting $\bfw_s=\bfw(R(n_s,\ldots,n_1))$ then by (\ref{eq:writherot}) and (\ref{eq:writhesum})
\[
\bfw_s=\bfw\left(\overset{\longrightarrow}{R(n_{s-1},\ldots,n_1)}+T_{n_s}\right)
=\left\{
\begin{array}{ll}
\begin{pmatrix}0&1\cr1&0\cr\end{pmatrix}\bfw_{s-1}
+\begin{pmatrix}n_s\cr-n_s\cr\end{pmatrix}&s\hbox{ odd}\cr
\begin{pmatrix}0&1\cr1&0\cr\end{pmatrix}\bfw_{s-1}
+\begin{pmatrix}-n_s\cr-n_s\cr\end{pmatrix}&s\hbox{ even}\cr
\end{array}
\right.
\]
from which it follows that the first entry in $\bfw_{k}$ is $w^N=-\sum\limits_{s=1}^kn_s$. The formula of Theorem \ref{Jonesrationalformula} now simplifies, while still enabling the the Jones polynomial of an arbitrary rational knot to be computed.

\begin{theorem}\label{Jonesevenformula} For even $k$ and $n_1,\ldots,n_k\in2\Z$, the Jones polynomial of the knot obtained by numerator closure of the rational tangle $R(n_k,\ldots,n_1)$ is
 \[
V_{K_r}(t)=(-1)^{\frac{k}2}
u^{\sum_sn_s}
\begin{pmatrix}1&i(u^{-1}-u)\cr\end{pmatrix}\bfB_{n_k}\ldots{}\bfB_{n_1}
\begin{pmatrix}1\cr0\cr\end{pmatrix}
\]
where $t=-u^{-2}$, $d=i(u^{-1}-u)$, $[n]=\frac{u^n-u^{-n}}{u-u^{-1}}$  and $\bfB_n\equiv\begin{pmatrix}[n]&iu^{-n}\cr{}iu^n&0\cr\end{pmatrix}$.
\end{theorem}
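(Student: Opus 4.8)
The plan is to obtain Theorem \ref{Jonesevenformula} as a direct specialization of Theorem \ref{Jonesrationalformula}, using the even-length, even-integer hypothesis to replace the numerator writhe $w^N$ by the explicit value $w^N=-\sum_s n_s$ and then simplifying the exponents of $-1$ and of $u$. The only non-algebraic input is the writhe identity itself, which follows from the iterative writhe-vector computation indicated in the paragraph preceding the statement; everything else is substitution, noting in particular that the vector prefactor $\begin{pmatrix}1&d\end{pmatrix}$ becomes $\begin{pmatrix}1&i(u^{-1}-u)\end{pmatrix}$ as soon as $d=i(u^{-1}-u)$ is inserted.

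First I would carry out the writhe computation. Writing $\bfw_s=\bfw(R(n_s,\ldots,n_1))=\begin{pmatrix}a_s\cr b_s\end{pmatrix}$ and starting from $\bfw_0=\bfw(T_\infty)=\begin{pmatrix}0\cr0\end{pmatrix}$, each iteration of the construction is a rotation of the previous tangle followed by addition of $T_{n_s}$. The key structural point is that the intermediate types alternate: $R(n_{s-1},\ldots,n_1)$ is of type $\infty$ for $s$ odd and of type $0$ for $s$ even, so the rotated tangle has type $0$ and $\infty$ respectively. This selects the first branch of (\ref{eq:writhesum}) when $s$ is odd and the second branch when $s$ is even; since $n_s$ is even we have $\iota(T_{n_s})=0$, so the permutation exponent in that branch vanishes and the corresponding matrix is the identity. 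Together with (\ref{eq:writherot}) and $\bfw(T_{n_s})=\begin{pmatrix}n_s\cr-n_s\end{pmatrix}$, this reproduces the two recursions displayed before the theorem. Applying $\zooz$ and reading off first coordinates yields, for even $s$, the two-step relation $a_s=a_{s-2}-n_{s-1}-n_s$, which telescopes from $a_0=0$ to $a_k=-\sum_{s=1}^k n_s$. As $w^N$ is the first entry of $\bfw_k$, this gives $w^N=-\sum_s n_s$.

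With the writhe identity in hand the rest is purely algebraic. Substituting $w^N=-\sum_s n_s$ into the $u$-exponent of Theorem \ref{Jonesrationalformula} gives $-\tfrac32 w^N-\tfrac12\sum_s n_s=\tfrac32\sum_s n_s-\tfrac12\sum_s n_s=\sum_s n_s$, matching $u^{\sum_s n_s}$. The same substitution turns the $-1$ exponent into $-\tfrac14\sum_s n_s+\tfrac14\sum_s n_s-\tfrac{k}{2}=-\tfrac{k}{2}$, and since $k$ is even the integer $\tfrac{k}{2}$ satisfies $(-1)^{-k/2}=(-1)^{k/2}$, producing the claimed sign $(-1)^{k/2}$.

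I do not anticipate a genuine obstacle once Theorem \ref{Jonesrationalformula} is granted, since all the content sits in the writhe identity. The one place demanding care is the case split in (\ref{eq:writhesum}): one must correctly identify which rotated intermediate tangle is of type $\infty$ (forcing the non-additive branch) and check that the evenness of $n_s$ kills the permutation exponent there, so that the recursion takes the clean form above. The evenness of $k$ is likewise essential, both to guarantee that $\tfrac{k}{2}$ is an integer in the sign simplification and to ensure that the two-step telescoping of $a_s$ terminates exactly at $a_k$.
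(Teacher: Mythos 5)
Your proof is correct and takes essentially the same approach as the paper: the paper likewise uses the type alternation of the intermediate tangles and the evenness of the $n_s$ to select the branches of (\ref{eq:writhesum}), obtains the recursion for $\bfw_s$, concludes $w^N=-\sum_s n_s$, and substitutes into Theorem \ref{Jonesrationalformula}. Your two-step telescoping of the first coordinate is just a slightly more explicit rendering of the paper's reading off of the first entry of $\bfw_k$.
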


\section{A combinatorial formula for Jones polynomials of rational knots}

In this section we evaluate the matrix product in Theorem \ref{Jonesevenformula} and carefully investigate cancellations, to arrive at the  compact formula for the Jones polynomial of rational knots given in Theorem \ref{jonesqnewformula}.

\noindent Let $K\!=R(2m_{2k},\ldots,2m_1)^N$ be a rational knot. Write $[k]=\{1,\ldots,k\}$, not to be confused with the $q$-number of the last section.

\begin{definition}
For $k\in\N$, denote by $\s_k$ the set of all subsets $S\subset[k]$ (possibly empty) for which
\begin{itemize}
\item[(i)] $|S|$ has the same parity as $k$;
\item[(ii)] for $S\not=\emptyset$, when the elements of $S$ are listed in ascending order, the parities alternate with the first (smallest) element odd.
\end{itemize}
\end{definition}

\noindent For a set $S\in\s_k$, the complementary set $[k]\backslash{}S$ has an even number of elements, which when written in ascending order will be denoted $a_1,b_1,a_2,b_2,\ldots$.

\begin{lemma}
$\bfB_{n_k}\ldots\bfB_{n_1}\begin{pmatrix}1\cr0\cr\end{pmatrix}
=\begin{pmatrix}
\sum\limits_{S\in\s_k}i^{k-|S|}\cdot{}u^{\sum_j(n_{a_j}-n_{b_j})}
\prod\limits_{r\in{}S}[n_r]\cr
\sum\limits_{S\in\s_{k-1}}i^{k-|S|}\cdot{}u^{n_k+\sum_j(n_{a_j}-n_{b_j})}
\prod\limits_{r\in{}S}[n_r]\cr
\end{pmatrix}$
\end{lemma}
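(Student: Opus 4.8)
The plan is to prove the formula by induction on $k$, tracking the two-component vector $\mathbf{v}_k := \bfB_{n_k}\cdots\bfB_{n_1}\begin{pmatrix}1\cr0\end{pmatrix}$ and showing that multiplication by $\bfB_{n_{k+1}}$ reproduces exactly the claimed sum over $\s_{k+1}$. The combinatorial data here is that $\s_k$ indexes subsets $S\subset[k]$ whose size matches the parity of $k$ and whose ordered elements alternate in parity starting odd, while the complement $[k]\setminus S$ (always even-sized) is split into consecutive pairs $(a_1,b_1),(a_2,b_2),\dots$ contributing $u^{\sum_j(n_{a_j}-n_{b_j})}$, and each $r\in S$ contributes a $q$-number $[n_r]$ together with a power of $i$ recording $k-|S|$. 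So I would first restate the claim as two coupled identities: the top component of $\mathbf{v}_k$ is the sum over $\s_k$, and the bottom component is $u^{n_k}$ times the sum over $\s_{k-1}$ (with the top exponent shifted). This coupling is what makes a clean induction possible.

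First I would verify the base case. For $k=1$, $\s_1$ consists of subsets of odd size with smallest element odd, so $\s_1=\{\{1\}\}$; the top entry should be $i^{0}u^{0}[n_1]=[n_1]$, and indeed $\bfB_{n_1}\begin{pmatrix}1\cr0\end{pmatrix}=\begin{pmatrix}[n_1]\cr iu^{n_1}\end{pmatrix}$. The bottom entry should match the $\s_0$ sum: $\s_0=\{\emptyset\}$ (even size, vacuously alternating), giving $i^{1}u^{n_1+0}=iu^{n_1}$, which agrees. For the inductive step, writing $\mathbf{v}_k=\begin{pmatrix}f_k\cr g_k\end{pmatrix}$, the recursion is
\[
\begin{pmatrix}f_{k+1}\cr g_{k+1}\end{pmatrix}
=\begin{pmatrix}[n_{k+1}]&iu^{-n_{k+1}}\cr iu^{n_{k+1}}&0\end{pmatrix}
\begin{pmatrix}f_k\cr g_k\end{pmatrix}
=\begin{pmatrix}[n_{k+1}]f_k+iu^{-n_{k+1}}g_k\cr iu^{n_{k+1}}f_k\end{pmatrix}.
\]
The bottom component is then immediate: $g_{k+1}=iu^{n_{k+1}}f_k$, and since $f_k$ is by hypothesis the sum over $\s_k$, multiplying by $iu^{n_{k+1}}$ shifts every $u$-exponent by $n_{k+1}$ and raises every power of $i$ by one, which is exactly the claimed $\s_{(k+1)-1}=\s_k$ description of $g_{k+1}$. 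So the only real content is the top component.

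The heart of the argument is to show $[n_{k+1}]f_k+iu^{-n_{k+1}}g_k$ equals the $\s_{k+1}$ sum, and this is where I expect the main obstacle to lie: it is a bijective/combinatorial matching of terms rather than a formal manipulation. I would classify each $S\in\s_{k+1}$ according to whether $k+1\in S$. If $k+1\in S$, then since $k+1$ must sit at the end of the alternating-parity list, $S\setminus\{k+1\}$ is forced to lie in $\s_k$ (the parity condition on the new last element pins down the parity of $|S|$ correctly), and such terms carry the factor $[n_{k+1}]$ with unchanged $u$-exponent and unchanged $i$-power, matching the $[n_{k+1}]f_k$ contribution. If $k+1\notin S$, then $k+1$ becomes the final element of the complement, hence the second coordinate $b_j$ of the last pair $(a_j,b_j)=(\,\cdot\,,k+1)$, contributing $u^{-n_{k+1}}$ and an extra factor of $i$ (since $k+1-|S|$ increases by one relative to the smaller configuration on $[k]$); its partner $a_j$ must then be $k$, forcing $k$ into the complement too, so that $S\in\s_{k-1}$ viewed inside $[k-1]$ — and this is precisely the $\s_{k-1}$ sum appearing in $g_k$, multiplied by $iu^{-n_{k+1}}$, with the $u^{n_k}$ from $g_k$ combining with $u^{-n_{k+1}}$ to build the pair exponent $u^{n_k-n_{k+1}}$. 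The delicate point to check carefully is that the parity/alternation bookkeeping of Definition~(i)–(ii) makes these two cases exhaustive and disjoint, in particular that forcing $a_j=k$ in the second case is compelled by the alternation rule rather than merely allowed; the limiting cases where $S$ or the complement is empty, and the $q$-number degeneracies of Remark~\ref{singqnumber} at $u=\pm1$, should be noted but cause no difficulty since all identities are polynomial in $u,u^{-1}$.
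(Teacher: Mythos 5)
Your argument is correct, but it takes a genuinely different route from the paper's. The paper proves the lemma in one shot, non-inductively: it expands the matrix product as a sum over all sequences of intermediate indices $u_0,\ldots,u_k\in\{1,2\}$ (with $u_0=u_k=1$ for the top entry, $u_k=2$ for the bottom), notes that $(\bfB_n)_{22}=0$ forces any nonvanishing sequence to consist of isolated $2$'s separated by strings of $1$'s, and sets up a single global bijection between such sequences and sets $S\in\s_k$ (namely $S=\{j:u_{j-1}=u_j=1\}$), under which the complement automatically splits into consecutive pairs $b_j=a_j+1$ recording the factors $iu^{n_{a_j}}$ and $iu^{-n_{b_j}}$. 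You replace this global bijection by induction on $k$ with a case split on whether $k+1\in S$, driven by exactly the coupling you isolate: the $\s_{k-1}$ sum sitting in the bottom entry $g_k$ is what feeds the $iu^{-n_{k+1}}g_k$ term of the next top entry. The delicate point you flag but leave unchecked does hold, and the verification is two lines: if $k+1\notin S$ but $k\in S$, then $k$ would be the largest element of $S$; by the alternation rule its parity equals that of $|S|$, which by condition (i) equals the parity of $k+1$ --- a contradiction, so $k\notin S$ and the last pair of the complement is forced to be $(k,k+1)$. With that inserted, both directions of your Case 1 bijection ($S\leftrightarrow S\setminus\{k+1\}$ between $\{S\in\s_{k+1}:k+1\in S\}$ and $\s_k$) and your Case 2 matching are sound, and the base case is right. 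As for what each approach buys: the paper's path expansion makes the origin of $\s_k$ and of the consecutive pairing $b_j=a_j+1$ completely transparent, handles both vector components simultaneously, and needs no induction; your argument is more elementary in that it never mentions index sequences at all, at the cost of requiring the coupled two-component induction hypothesis and the parity argument above to close Case 2.
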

\begin{proof}
We expand the matrix product by definition as a sum over all intermediate indices of products of matrix entries. Thus the upper entry of the matrix product in the lemma is
\[
\sum\limits_{u_1,\ldots,u_{k-1}\in\{1,2\}}(\bfB_{n_k})_{u_{k},u_{k-1}}\ldots
(\bfB_{n_1})_{u_1,u_0}\eqno{(*)}
\]
where the indices $u_i$ are extended so that $u_0=u_{k}=1$. Similarly the lower entry is obtained from the same sum but where the indices are extended so that $u_0=1$, $u_k=2$.

Recall that $(\bfB_n)_{11}=[n]$, $(\bfB_n)_{12}=iu^{-n}$, $(\bfB_n)_{21}=iu^n$ while $(\bfB_n)_{22}=0$. Sequences $u_0,\ldots,u_k$ making non-zero contributions to (*) will therefore be ones in which all adjacent pairs are 11, 12 or 21; that is they consist of isolated 2's separated by strings of 1's. Such sequences starting and beginning with 1 are in bijection with elements $S\in\s_k$ where $S$ is the set of $j\in[k]$ for which $u_{j-1}=u_j=1$. Then the complementary set $[k]\backslash{}S$ when written in ascending order will have form $a_1,b_1,a_2,b_2,\ldots$ where the $a_j$ denote the positions (index values) of the isolated 2's in the sequence $u_i$ and $b_j=a_j+1$. The matrix element product in (*) corresponding to this particular sequence $u_i$ is the summand in the upper element given in the lemma.

Similarly for the lower entry in the matrix product, where now $u_k=2$. Removing this last element, relevant sequences $(u_j)$ are seen to be in bijection with $\s_{k-1}$.\end{proof}

\begin{example}
The sequence $(u_j)_{j=0}^{13}$ given by $11121211121121$ is a relevant sequence for $k=13$ and corresponds to $S=\{1,2,7,8,11\}\in\s_{13}$ which has complement in $[13]$ which reads in ascending order $3,4,5,6,9,10,12,13$ making the integers $a_j$ into $3,5,9,12$ with $b_j=a_j+1$.
\end{example}

Applying this lemma to the matrix product in Theorem \ref{Jonesevenformula} with $2k$ replacing $k$ and $2m_s$ replacing $n_s$ gives
\begin{align*}
V_{K}(t)\!&=\!
u^{2\sum\limits_sm_s}
\!\begin{pmatrix}1&i(u^{-1}\!-\!u)\cr\end{pmatrix}\!\begin{pmatrix}
\sum\limits_{S\in\s_{2k}}i^{-|S|}\cdot{}u^{2\sum\limits_j(m_{a_j}-m_{b_j})}
\prod\limits_{r\in{}S}[2m_r]\cr
\sum\limits_{S\in\s_{2k-1}}i^{-|S|}\cdot{}u^{2m_{2k}+\sum\limits_j(2m_{a_j}-2m_{b_j})}
\prod\limits_{r\in{}S}[2m_r]\cr
\end{pmatrix}\cr
&=\sum\limits_{S\in\s_{2k}}i^{-|S|}\cdot{}u^{4\sum\limits_jm_{a_j}}
\prod\limits_{r\in{}S}(u^{2m_r}[2m_r])\cr
&\qquad+i(u^{-1}-u)
 \sum\limits_{S\in\s_{2k-1}}i^{-|S|}\cdot{}u^{4m_{2k}+\sum\limits_j4m_{a_j}}
\prod\limits_{r\in{}S}(u^{2m_r}[2m_r])
\end{align*}
where in the last line we distribute the factor $u^{2\sum\limits_sm_s}$ as factors $u^{2m_s}$ amongst $s$ either in $S$ or of form $a_j,b_j$ (or, in the second line, $2k$). Recall that $u^2=-t^{-1}$ and so $u^{2m}[2m]=\frac{t^{-2m}-1}{u-u^{-1}}$. Thus
\begin{align*}
V_K(t)=&\sum\limits_{S\in\s_{2k}}\frac{(-1)^{\frac{|S|}2}}{(u-u^{-1})^{|S|}}
\cdot{}t^{-2\sum\limits_jm_{a_j}}\prod\limits_{r\in{}S}(t^{-2m_r}-1)\cr
&-\sum\limits_{S\in\s_{2k-1}}\frac{(-1)^{\frac{|S|-1}2}}{(u-u^{-1})^{|S|-1}}
\cdot{}t^{-2m_{2k}-2\sum\limits_jm_{a_j}}\prod\limits_{r\in{}S}(t^{-2m_r}-1)
\end{align*}
Note that in the first sum $|S|$ is always even while in the second sum $|S|$ is always odd. Furthermore $-(u-u^{-1})^2=t+2+t^{-1}$ while the factor $t^{-2m_{a_j}}$  can be split up as $(t^{-2m_{a_j}}-1)+1$ and more generally,
\[
t^{-2\sum\limits_jm_{a_j}}
=\prod\limits_jt^{-2m_{a_j}}
=\sum\limits_A\prod_{a\in{}A}(t^{-2m_a}-1)
\]
where the sum is over all (not necessarily proper) subsets $A\subset\{a_1,a_2,\ldots\}$. Thus
\begin{equation}\label{eq:Jonesintermediate}
\begin{array}{rl}
V_K(t)=&\sum\limits_{S\in\s_{2k}}(t+2+t^{-1})^{-\frac{|S|}2}
\cdot\sum\limits_{A\subset\{a_1,\ldots\}}\prod\limits_{r\in{}S\cup{}A}(t^{-2m_r}-1)\cr
&-\!\sum\limits_{S\in\s_{2k-1}}\!\!\!(t+2+t^{-1})^{-\frac{|S|-1}2}
\cdot\sum\limits_{A\subset\{a_1,\ldots\,2k\}}\prod\limits_{r\in{}S\cup{}A}(t^{-2m_r}-1)
\end{array}
\end{equation}
Next observe that in the first line, the possible subsets $S\cup{}A\subset[2k]$ which can occur are: the empty set and every subset whose smallest element is odd. Each such set $T$ appears precisely once and the corresponding sets $S$, $A$ can be recovered by the following algorithm. If $T=\emptyset$ then $S=A=\emptyset$. Otherwise, reading $T$ as a sequence in ascending order, split $T$ into subsequences $T_1,T_2,\ldots,T_p$ of consecutive elements of $T$ of the same parity; the first subsequence $T_1$  will be of odd parity, the second $T_2$ of even parity and so on. The largest element of $T$ will have the same parity as $p$. Then
\[
S=\left\{
\begin{array}{ll}
\big\{\max{T_j}\bigm|1\leq{}j\leq{}p\big\}&\hbox{\ if $p$ is even}\cr
\big\{\max{T_j}\bigm|1\leq{}j\leq{}p-1\big\}&\hbox{\ if $p$ is odd}\cr
\end{array}
\right.
\]
By definition $S$ has an even number of elements, $2l$ where $l\equiv\lfloor\frac{p}2\rfloor$, while, if it is non-empty then its smallest element is odd and the parities of successively increasing elements alternate.  In either case, it is easy to see that $A=T\backslash{}S$ is a subset of the $a_j$'s associated with the set $S$.

Meanwhile a similar analysis in the second line shows that the possible sets $S\cup{}A\subset[2k]$ which can occur are all those subsets $T$ with odd minimum element. To reconstruct $S$ and $A$, read $T$ in ascending order, subdivide into subsequences $T_1,\ldots,T_p$ by parity as in the previous case. Then
\[
S=\left\{
\begin{array}{ll}
\big\{\max{T_j}\bigm|1\leq{}j\leq{}p-1\big\}&\hbox{\ if $p$ is even}\cr
\big\{\max{T_j}\bigm|1\leq{}j\leq{}p\big\}&\hbox{\ if $p$ is odd}\cr
\end{array}
\right.
\]
Now $S$ has an odd number of elements, $2l+1$ where $l\equiv\lfloor\frac{p-1}{2}\rfloor$, their parities alternating when placed in ascending order, the first being odd.

Changing the order of the summations in (\ref{eq:Jonesintermediate}) now gives
\begin{align*}
V_K(t)=1+&\sum\limits_T(t+2+t^{-1})^{-\lfloor\frac{p(T)}2\rfloor}
\prod_{r\in{}T}(t^{-2m_r}-1)\cr
&-\sum\limits_T(t+2+t^{-1})^{-\lfloor\frac{p(T)-1}2\rfloor}
\prod_{r\in{}T}(t^{-2m_r}-1)
\end{align*}
where the summations are over all (non-empty) subsets $T\subset[2k]$ whose smallest element is odd. The first term comes from the empty set. Here $p(T)$ denotes the number of subsequences into which $T$ is split as above (consecutive elements of the same parity), equivalently it is one more than the number of parity changes between adjacent elements when $T$ is listed in ascending order. When $p(T)$ is odd, the contributions of $T$ to the two sums cancel. When $p(T)$ is even the contributions differ by a factor of $-(t+2+t^{-1})$. In conclusion we have the following formulation of the Jones polynomial.

\begin{theorem}\label{jonesqnewformula}
Suppose $K$ is a rational knot expressed as the numerator closure of a rational tangle made out of an even number of even-length twist boxes $R(2m_{2k},\ldots,2m_1)$.  Then the Jones polynomial of $K$ is given by
\[V_K(t)=1-(t+1+t^{-1})\sum_{T}(t+2+t^{-1})^{-\frac{p(T)}2}
\prod\limits_{r\in{}T}(t^{-2m_r}-1)
\]
where the sum is over all non-empty subsets $T\subset[2k]$ whose minimal element is odd and maximal element is even and $p(T)$ is one more than the number of parity changes between  consecutive elements when $T$ is listed in ascending order.
\end{theorem}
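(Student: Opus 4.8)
The plan is to begin from the matrix-product formula of Theorem~\ref{Jonesevenformula}, specialised to $2k$ even twist boxes (so $k$ is replaced by $2k$ and $n_s$ by $2m_s$), and then to expand and collect terms. First I would apply the combinatorial Lemma to rewrite the two entries of $\bfB_{2m_{2k}}\cdots\bfB_{2m_1}\binom{1}{0}$ as sums over $S\in\s_{2k}$ and $S\in\s_{2k-1}$ respectively. At this stage the prefactor $(-1)^{k}$ coming from $(-1)^{(2k)/2}$ cancels exactly against the $i^{2k}$ hidden in the weights $i^{2k-|S|}$, leaving clean factors $i^{-|S|}$, and the row vector $\begin{pmatrix}1 & i(u^{-1}-u)\end{pmatrix}$ couples the $\s_{2k}$-sum to the first component and the $\s_{2k-1}$-sum (with an extra $i(u^{-1}-u)$) to the second.

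The next step is to pass to the variable $t$. The identities that do the work are $u^{2}=-t^{-1}$, which gives $u^{2m}[2m]=(t^{-2m}-1)/(u-u^{-1})$, and $-(u-u^{-1})^{2}=t+2+t^{-1}$. Using the first, each factor $[2m_r]$ with $r\in S$ becomes $(t^{-2m_r}-1)$ up to a power of $(u-u^{-1})$; using the second, the surviving even power $(u-u^{-1})^{-|S|}$ (or $(u-u^{-1})^{-(|S|-1)}$ in the second sum, the missing power being supplied by the $i(u^{-1}-u)$ entry) turns into a power of $(t+2+t^{-1})$, with all residual factors of $i$ and signs collapsing into powers of $-1$ (and an overall minus sign for the second sum). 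One also rewrites the leftover monomial as $t^{-2\sum_j m_{a_j}}=\prod_j t^{-2m_{a_j}}$ and splits each $t^{-2m_{a_j}}=(t^{-2m_{a_j}}-1)+1$, producing a sum over subsets $A$ of the indices $a_1,a_2,\dots$ associated with $S$. After this purely algebraic phase the expression is the rational function of $t$ displayed in~(\ref{eq:Jonesintermediate}).

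The combinatorial heart of the argument --- and the step I expect to be the main obstacle --- is the reindexing that makes the cancellations visible. The idea is that a pair $(S,A)$ is determined by, and determines, the single subset $T=S\cup A\subset[2k]$. To recover $(S,A)$ from $T$ I would list $T$ in ascending order, cut it into the maximal runs $T_1,\dots,T_{p(T)}$ of constant parity (the first run odd since the minimum is odd), and take $S$ to be an appropriate set of run-maxima; the two sums require complementary conventions (all run-maxima versus all but the last), reflecting the parity of $|S|$ forced by membership in $\s_{2k}$ versus $\s_{2k-1}$. The delicate bookkeeping is to check that this is a genuine bijection onto the correct index families --- the empty set together with all $T$ of odd minimum for the first sum, and all $T$ of odd minimum for the second --- and that under it $|S|$ translates into the exponents $\lfloor p(T)/2\rfloor$ and $\lfloor(p(T)-1)/2\rfloor$ while $A=T\setminus S$ sits inside the $a_j$'s.

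Finally I would interchange the order of summation so that both terms range over the same admissible family, and read off the parity dichotomy. When $p(T)$ is odd the two floors coincide and the $+$ and $-$ contributions of $T$ cancel; when $p(T)$ is even the floors differ by one, so the two contributions combine with coefficient $1-(t+2+t^{-1})=-(t+1+t^{-1})$. Since, given that the minimum of $T$ is odd, $p(T)$ is even precisely when the maximum of $T$ is even (the final run $T_{p(T)}$ then being of even parity), summing only over such $T$ and keeping the solitary constant $1$ from $T=\emptyset$ yields exactly the stated closed form.
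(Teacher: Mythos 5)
Your proposal is correct and follows essentially the same route as the paper's own proof: expand the matrix product of Theorem~\ref{Jonesevenformula} via the combinatorial lemma over $\s_{2k}$ and $\s_{2k-1}$, convert to $t$ using $u^{2m}[2m]=(t^{-2m}-1)/(u-u^{-1})$ and $-(u-u^{-1})^2=t+2+t^{-1}$, split $t^{-2m_{a_j}}=(t^{-2m_{a_j}}-1)+1$ to sum over subsets $A$, and reindex over $T=S\cup A$ via maximal parity runs and their maxima, so that terms with $p(T)$ odd cancel and terms with $p(T)$ even combine with coefficient $1-(t+2+t^{-1})=-(t+1+t^{-1})$. Every step, including the sign bookkeeping and the observation that $p(T)$ is even precisely when $\max T$ is even, matches the paper's argument.
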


\begin{remark}
Note that $(t^{-2m_r}-1)$ is divisible by $t^2-1$ and thus the innermost product in the formula for $V_K(t)$ in the above theorem is divisible by $(t^2-1)^{|T|}$. Meanwhile $t+2+t^{-1}=t^{-1}(t+1)^2$ and so since $p(T)\leq|T|$, the summand is a polynomial in $t,t^{-1}$ divisible by $(1-t)^{|T|}$.
\end{remark}

\section{Templates generating Jones coincidences}

In this section we use Theorem \ref{Jonesrationalformula} and matrix identities amongst the matrices $\bfB_n$ to prove invariance of the Jones polynomial of rational knots under certain moves.

\begin{lemma}\label{almostsameJoneslemma}
If $K$ and $K'$ are two knots whose Jones polynomials differ by at most a sign and multiplication by a power of $t$, then their Jones polynomials are identical.
\end{lemma}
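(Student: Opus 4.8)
The plan is to reduce the problem to two classical normalization facts about the Jones polynomial of a knot: that $V_K(1)=1$ and that $V_K'(1)=0$ for every knot $K$. Granting these, suppose $V_K(t)=\epsilon\,t^m V_{K'}(t)$ with $\epsilon\in\{+1,-1\}$ and $m\in\Z$ (the power is forced to be an integer since both sides are Laurent polynomials in $t$ and $V_{K'}(1)=1\neq0$). Evaluating at $t=1$ gives $1=V_K(1)=\epsilon\,V_{K'}(1)=\epsilon$, which pins down the sign as $\epsilon=1$. Differentiating the resulting relation $V_K(t)=t^m V_{K'}(t)$ and evaluating at $t=1$ gives $V_K'(1)=m\,V_{K'}(1)+V_{K'}'(1)=m$; since $V_K'(1)=0$ this forces $m=0$, whence $V_K\equiv V_{K'}$.

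It remains to justify the two normalization facts, and this is where the only real work lies; both follow from invariance under crossing changes. Writing the Jones skein relation in the standard form $t^{-1}V_{L_+}-t\,V_{L_-}=(t^{1/2}-t^{-1/2})V_{L_0}$ (a consequence of the Kauffman bracket skein relation (\ref{eq:skeinrelation}) together with the writhe normalization (\ref{eq:Jonespoly})), setting $t=1$ yields $V_{L_+}(1)=V_{L_-}(1)$, so $V(1)$ is unchanged under any crossing change. Since the $c$-component unlink evaluates to $(-2)^{c-1}$ at $t=1$ and every $c$-component link can be converted to the unlink by a finite sequence of crossing changes, one obtains $V_L(1)=(-2)^{c-1}$ for all links; in particular $V_K(1)=1$ for a knot.

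For the vanishing of the derivative I would differentiate the same skein relation and set $t=1$, obtaining $V_{L_+}'(1)-V_{L_-}'(1)=V_{L_0}(1)+V_{L_+}(1)+V_{L_-}(1)$. Applying this at a \emph{self}-crossing of a knot, both $L_+$ and $L_-$ are knots while the oriented resolution $L_0$ is a $2$-component link, so by the previous step the right-hand side equals $-2+1+1=0$. Thus $V'(1)$ is also invariant under crossing changes among knots, and since it vanishes for the unknot (whose Jones polynomial is the constant $1$) it vanishes for every knot. The main obstacle is precisely this derivative computation: one must carefully track the inhomogeneous terms arising when differentiating the skein relation and then use the component-count evaluation $V_{L_0}(1)=-2$, which itself rests on the link-level identity $V_L(1)=(-2)^{c-1}$ established above.
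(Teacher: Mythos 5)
Your proposal is correct, but it takes a genuinely different route from the paper. Both arguments dispose of the sign the same way, via $V_K(1)=1$. For the power of $t$, however, the paper quotes Proposition 12.5 of Jones' Annals paper --- that $V_K(t)-1$ is divisible by $(t-1)(t^3-1)$ for any knot $K$ --- and derives a contradiction: if $V_{K'}=t^nV_K$ with $n\geq1$, then $(1+t+\cdots+t^{n-1})V_K(t)$ would be divisible by $t^3-1$, contradicting the fact that it evaluates to $n\neq0$ at $t=1$. You instead use only the weaker fact $V_K'(1)=0$ (equivalent to divisibility of $V_K(t)-1$ by $(t-1)^2$), which kills the exponent directly by differentiating $V_K=t^mV_{K'}$ at $t=1$ to get $m=0$. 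Moreover, you prove both normalization facts from scratch: your skein-relation computations are correct --- setting $t=1$ gives invariance of $V(1)$ under crossing changes and hence $V_L(1)=(-2)^{c-1}$, and the differentiated relation at $t=1$ gives $V_{L_+}'(1)-V_{L_-}'(1)=V_{L_0}(1)+V_{L_+}(1)+V_{L_-}(1)$, which vanishes at a self-crossing of a knot since the right side is $-2+1+1=0$, so $V'(1)$ propagates from the unknot to all knots by crossing changes. So your argument is self-contained where the paper's leans on an external citation; what the paper's approach buys is brevity given the cited divisibility result, while yours buys elementariness (only first-order information at $t=1$ is needed) and independence from Jones' Proposition 12.5.
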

\begin{proof}
Since $V_K(1)=1$, thus there can be no sign difference.
Suppose otherwise that $V_K\not\equiv{}V_{K'}$; without loss of generality, assume that $V_{K'}=t^nV_K$ with $n\in\N$. By Proposition 12.5 in \cite{Jones87}, $V_K(t)-1$ is divisible by $(t-1)(t^3-1)$. Hence $V_{K'}-V_K=(t^n-1)V_K(t)$ is divisible by $(t-1)(t^3-1)$. Dividing through by $t-1$ we see that $(1+t+\cdots+t^{n-1})V_K(t)$ is divisible by $t^3-1$ which contradicts the fact that its evaluation at $t=1$ is $n$.
\end{proof}

Combining with Theorem~\ref{Jonesrationalformula}, this means that for identification of Jones polynomials of two rational knots, it is sufficient to prove that the corresponding matrix product agrees (even up to a sign and power of $u$).

\begin{lemma}\label{Bproperties}
The matrices $\bfB_n\equiv\begin{pmatrix}[n]&iu^{-n}\cr{}iu^n&0\cr\end{pmatrix}\in{}SL_2$ have the following elementary properties. Here $d=i(u^{-1}-u)$.
\begin{itemize}
    \item[(i)] $\bfB_n^*=-\bfB_{-n}$ where $^*$ is conjugate transpose (conjugate being defined by $i\longmapsto-i$, $u\longmapsto{}u$).
    \item[(ii)] Products of $\bfB_n$ matrices are matrices of the form $\bfC=\begin{pmatrix}\alpha&i\beta\cr{}i\gamma&\delta\cr\end{pmatrix}$ where $\alpha,\beta,\gamma,\delta\in\Z[u,u^{-1}]$ with $\alpha\delta+\beta\gamma=1$;  in particular $\bfC^{-1}=-\bfB_0\bfC^*\bfB_0$,
    $\begin{pmatrix}1&0\end{pmatrix}\bfC^*\begin{pmatrix}1\cr0\end{pmatrix}
    =\begin{pmatrix}1&0\end{pmatrix}\bfC\begin{pmatrix}1\cr0\end{pmatrix}$
    and $\begin{pmatrix}1&d\end{pmatrix}\bfC^*\begin{pmatrix}1\cr-d\end{pmatrix}
    =\begin{pmatrix}1&d\end{pmatrix}\bfC\begin{pmatrix}1\cr-d\end{pmatrix}$.
    \item[(iii)] $\bfB_n\bfB_0\bfB_m=-\bfB_{n+m}$.
    \item[(iv)]$id\cdot\bfB_n
    =u^n\begin{pmatrix}1\cr-d\end{pmatrix}\begin{pmatrix}1&0\end{pmatrix}
    -u^{-n}\begin{pmatrix}1\cr0\end{pmatrix}\begin{pmatrix}1&d\end{pmatrix}$.
    \item[(v)] $\begin{pmatrix}1&d\end{pmatrix}\prod\limits_{j=1}^k\bfB_{n_j}
    \begin{pmatrix}1\cr0\end{pmatrix}
    =\begin{pmatrix}1&d\end{pmatrix}\prod\limits_{j=k}^1\bfB_{n_j}
    \begin{pmatrix}1\cr0\end{pmatrix}$.
    \item[(vi)] $\begin{pmatrix}1&d\end{pmatrix}\prod\limits_{j=1}^k\bfB_{n_j}
    \begin{pmatrix}1\cr-d\end{pmatrix}
    =(1-d^2)\cdot\begin{pmatrix}1&0\end{pmatrix}\prod\limits_{j=k}^1\bfB_{n_j}
    \begin{pmatrix}1\cr0\end{pmatrix}$.
    \item[(vii)]
    $\left(\begin{pmatrix}1&d\end{pmatrix}\bfC
    \begin{pmatrix}1\cr0\end{pmatrix}\right)_{u\rightarrow{}u^{-1}}
    \!\!\!\!\!\!=\begin{pmatrix}1&d\end{pmatrix}\bfC^*\begin{pmatrix}1\cr0\end{pmatrix}
    =\begin{pmatrix}1&0\end{pmatrix}\bfC\begin{pmatrix}1\cr-d\end{pmatrix}$
    for any matrix product $\bfC$ of matrices $\bfB_n$.
\end{itemize}
\end{lemma}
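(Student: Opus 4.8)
The plan is to dispatch (i)--(iv) by direct computation and then bootstrap the substantive identities (v), (vi) and the first equality of (vii) from the rank-one decomposition in (iv).

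For (i), writing out $\bfB_n^*$ and using $[-n]=-[n]$ gives $-\bfB_{-n}$ in one line. For (ii), $\det\bfB_n=-i^2=1$ puts each $\bfB_n$ in $SL_2$, and the entry pattern $\begin{pmatrix}\alpha&i\beta\\i\gamma&\delta\end{pmatrix}$ with $\alpha,\beta,\gamma,\delta\in\Z[u,u^{-1}]$ is visibly closed under multiplication, so it holds for every product $\bfC$ by induction, whence $\alpha\delta+\beta\gamma=\det\bfC=1$ is automatic. The three consequences follow from this normal form: $\bfC^{-1}=-\bfB_0\bfC^*\bfB_0$ by multiplying out with $\bfB_0=\begin{pmatrix}0&i\\i&0\end{pmatrix}$, and the two scalar identities because the $(1,1)$ entry is fixed by $*$ and because $\begin{pmatrix}1&d\end{pmatrix}\bfC\begin{pmatrix}1\\-d\end{pmatrix}$ and $\begin{pmatrix}1&d\end{pmatrix}\bfC^*\begin{pmatrix}1\\-d\end{pmatrix}$ both collapse to $\alpha-\delta d^2+id(\gamma-\beta)$. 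Identity (iii) is the product $\bfB_n\bfB_0\bfB_m$ together with the $q$-number identity $u^{-n}[m]+u^m[n]=[n+m]$, and (iv) follows on comparing entries after noting $id=u-u^{-1}$, so that $id\,[n]=u^n-u^{-n}$. The same elementary manipulation settles the \emph{second} equality of (vii): both $\begin{pmatrix}1&d\end{pmatrix}\bfC^*\begin{pmatrix}1\\0\end{pmatrix}$ and $\begin{pmatrix}1&0\end{pmatrix}\bfC\begin{pmatrix}1\\-d\end{pmatrix}$ equal $\alpha-id\beta$.

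The heart of the matter is (v), (vi) and the first equality of (vii). Introduce the fixed covectors $e=\begin{pmatrix}1&0\end{pmatrix}$, $f=\begin{pmatrix}1&d\end{pmatrix}$ and vectors $e'=\begin{pmatrix}1\\0\end{pmatrix}$, $f'=\begin{pmatrix}1\\-d\end{pmatrix}$, which satisfy $ee'=ef'=fe'=1$ and $ff'=1-d^2$. In this notation (iv) reads $id\,\bfB_n=u^n f'e-u^{-n}e'f$, a sum of two rank-one matrices. Expanding $(id)^k$ times any sandwiched product, e.g. $(id)^k\begin{pmatrix}1&d\end{pmatrix}\bfC\begin{pmatrix}1\\0\end{pmatrix}=f\prod_{j=1}^k\big(u^{n_j}f'e-u^{-n_j}e'f\big)e'$, over the $2^k$ sign choices $\epsilon\in\{+,-\}^k$ turns each term into a telescoping product of the four scalars above. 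The only non-trivial scalar, $1-d^2$, is produced exactly where an $e'f$ block abuts a following $f'e$ block, that is at each descent-then-ascent $\epsilon_{j-1}=-,\ \epsilon_j=+$; the boundary contractions contribute a further $1-d^2$ or $1$ depending on which of $e,f$ and $e',f'$ bound the product. Thus each sandwiched product equals $\sum_\epsilon(\pm1)\,u^{m(\epsilon)}(1-d^2)^{c(\epsilon)}$, where $u^{m(\epsilon)}=\prod_{j:\epsilon_j=+}u^{n_j}\prod_{j:\epsilon_j=-}u^{-n_j}$ and $c(\epsilon)$ is the number of interior $-\!+$ transitions of $\epsilon$, augmented by $[\epsilon_1=+]$ when the left covector is $f$ and by $[\epsilon_k=-]$ when the right vector is $f'$.

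The one genuine obstacle is then a bookkeeping fact about binary strings: the simultaneous reversal $\epsilon\mapsto\epsilon^R$, $n_j\mapsto n_{k+1-j}$ preserves the monomial and sign, carries the reversed string's interior $-\!+$ count to the original string's interior $+\!-$ count, and swaps the two endpoint indicators; combined with the elementary identity $(\#\,-\!+)-(\#\,+\!-)=[\epsilon_k=+]-[\epsilon_1=+]$, a short case analysis on $(\epsilon_1,\epsilon_k)$ shows that the exponents $c$ of a string and its reverse agree for the boundary pairs $(f,e')$ and $(e,f')$, and differ by exactly $1$ for the pair $(f,f')$ versus $(e,e')$. This yields a weight-preserving bijection between the expansion of $\bfC$ and that of its reverse $\backC$, giving (v) directly and (vi) with the extra factor $1-d^2$. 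For the first equality of (vii), note that $u\mapsto u^{-1}$ sends $\bfB_n$ to $\bfB_n^{\mathrm T}$ and $d$ to $-d$, hence $\big(\begin{pmatrix}1&d\end{pmatrix}\bfC\begin{pmatrix}1\\0\end{pmatrix}\big)_{u\to u^{-1}}=\begin{pmatrix}1&-d\end{pmatrix}\backC^{\mathrm T}\begin{pmatrix}1\\0\end{pmatrix}$; transposing this scalar gives $\begin{pmatrix}1&0\end{pmatrix}\backC\begin{pmatrix}1\\-d\end{pmatrix}$, which equals $\begin{pmatrix}1&0\end{pmatrix}\bfC\begin{pmatrix}1\\-d\end{pmatrix}$ by the $(e,f')$ case of the reversal argument and then equals $\begin{pmatrix}1&d\end{pmatrix}\bfC^*\begin{pmatrix}1\\0\end{pmatrix}$ by the second equality of (vii). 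The delicate point throughout is the correct accounting of the endpoint terms in $c(\epsilon)$, so I would isolate the transition-count computation as a single combinatorial lemma and invoke it three times.
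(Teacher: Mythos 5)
Your proof is correct, and for the core items (v)--(vi) it takes a genuinely different route from the paper. The paper proves (v) and (vi) together by simultaneous induction on $k$: the inductive step detaches only the last factor $\bfB_{n_{k+1}}$, expands that single factor by the rank-one identity (iv), invokes both inductive hypotheses, and reassembles with (iv). You instead expand \emph{every} factor by (iv) at once, writing $(id)^k$ times the sandwiched product as a sum over $2^k$ sign strings $\epsilon$, each term being $\pm u^{\sum_j\epsilon_jn_j}$ times a product of scalars from $\{1,\,1-d^2\}$, and you obtain (v) and (vi) from a weight-preserving bijection $\epsilon\mapsto\epsilon^R$ between the expansions of $\bfC$ and $\backC$. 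Your endpoint bookkeeping checks out: the identity $\#(-+)-\#(+-)=[\epsilon_k=+]-[\epsilon_1=+]$ is correct, and with it the transition counts indeed agree under reversal for the boundary pairs $(f,e')$ and $(e,f')$, and differ by exactly one between $(f,f')$ and $(e,e')$, which is precisely the origin of the factor $1-d^2$ in (vi). What your route buys: the sum-over-sign-strings expansion is exactly the technique the paper deploys later to prove the Key Lemma (Lemma \ref{keylemma}), so the combinatorial lemma you propose to isolate would do double duty there, and it makes explicit where each factor $1-d^2$ arises, information the induction hides. What the paper's route buys is brevity: the induction needs no boundary case analysis at all. Two minor remarks: both arguments implicitly cancel a factor $(id)^k$, which is legitimate since $\Z[u,u^{-1}]$ is an integral domain and $d\neq0$; and in (vii) you chain $(f\bfC e')_{u\to u^{-1}}=e\backC f'=e\bfC f'=f\bfC^*e'$ via the $(e,f')$ case of your reversal lemma, whereas the paper first applies (v) and then substitutes $u\to u^{-1}$ and transposes --- both orderings are sound.
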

\begin{proof}
The first four properties are immediate from the definition of $\bfB_n$. Properties (v), (vi) follow by induction on $k$; the inductive step uses (iv). Thus, for $k=0$, an empty product is the identity matrix and (v),(vi) hold. Meanwhile, assuming both (v) and (vi) for $k$ and writing $\bfC=\prod\limits_{j=1}^k\bfB_{n_j}$,  $\backC=\prod\limits_{j=k}^1\bfB_{n_j}$ while $m=n_{k+1}$ we have by (iv)
\begin{align*}
&id\cdot\begin{pmatrix}1&d\end{pmatrix}\bfC\bfB_m\begin{pmatrix}1\cr0\end{pmatrix}\cr
&\overset{(iv)}{=}u^m\begin{pmatrix}1&d\end{pmatrix}\bfC\begin{pmatrix}1\cr-d\end{pmatrix}\begin{pmatrix}1&0\end{pmatrix}\begin{pmatrix}1\cr0\end{pmatrix}
-u^{-m}\begin{pmatrix}1&d\end{pmatrix}\bfC\begin{pmatrix}1\cr0\end{pmatrix}\begin{pmatrix}1&d\end{pmatrix}\begin{pmatrix}1\cr0\end{pmatrix}\cr
&=u^m\begin{pmatrix}1&d\end{pmatrix}\bfC\begin{pmatrix}1\cr-d\end{pmatrix}
-u^{-m}\begin{pmatrix}1&d\end{pmatrix}\bfC\begin{pmatrix}1\cr0\end{pmatrix}\cr
&\overset{(v),(vi)}{=}u^m(1-d^2)\begin{pmatrix}1&0\end{pmatrix}\backC\begin{pmatrix}1\cr0\end{pmatrix}
-u^{-m}\begin{pmatrix}1&d\end{pmatrix}\backC\begin{pmatrix}1\cr0\end{pmatrix}\cr
&=u^m\begin{pmatrix}1&d\end{pmatrix}\begin{pmatrix}1\cr-d\end{pmatrix}\begin{pmatrix}1&0\end{pmatrix}\backC\begin{pmatrix}1\cr0\end{pmatrix}
-u^{-m}\begin{pmatrix}1&d\end{pmatrix}\begin{pmatrix}1\cr0\end{pmatrix}
\begin{pmatrix}1&d\end{pmatrix}\backC\begin{pmatrix}1\cr0\end{pmatrix}\cr
&\overset{(iv)}{=}id\cdot\begin{pmatrix}1&d\end{pmatrix}\bfB_m\backC\begin{pmatrix}1\cr0\end{pmatrix}
\end{align*}
which is (v) with $k+1$ replacing $k$. Similarly for (vi).

\noindent Finally for (vii), note that $\bfB_n^T=(\bfB_n)_{u\rightarrow{}u^{-1}}$ from which it follows that $\backC=\bfC^T_{u\rightarrow{}u^{-1}}$. Since $\overline{d}=-d=d_{u\rightarrow{}u^{-1}}$, by (v)
\[\left[\begin{pmatrix}1&d\end{pmatrix}\!\bfC\!\begin{pmatrix}1\cr0\end{pmatrix}
\right]_{u\rightarrow{}u^{-1}}
\!\!\!\!\!\!\!\!=\!\left[\begin{pmatrix}1&d\end{pmatrix}\!\backC\!\begin{pmatrix}1\cr0\end{pmatrix}\right]_{u\rightarrow{}u^{-1}}
\!\!\!\!\!\!=\!\begin{pmatrix}1&\!\!-d\end{pmatrix}\!\bfC^T\!\begin{pmatrix}1\cr0\end{pmatrix}
\!\overset{T}{=}\!\!\begin{pmatrix}1&0\end{pmatrix}\!\bfC\!\begin{pmatrix}1\cr-d\end{pmatrix}
\]
\end{proof}

\begin{lemma}[\bf Key Lemma]\label{keylemma}
Suppose that $\bfU_j,\bfV_j$ $(0\leq{}j\leq{}k)$ are products of matrices $\bfB_n$ and $n_1,\ldots,n_k\in\Z$.

\noindent (1) If
 \begin{equation*}\begin{pmatrix}1&d\end{pmatrix}\bfU_j\begin{pmatrix}1\cr0\end{pmatrix}
 =\begin{pmatrix}1&d\end{pmatrix}\bfV_j\begin{pmatrix}1\cr0\end{pmatrix}
 \qquad\forall{}j
 \end{equation*}
  \begin{equation*}\begin{pmatrix}1&d\end{pmatrix}\bfU_j\begin{pmatrix}1\cr-d\end{pmatrix}\begin{pmatrix}1&0\end{pmatrix}\bfU_l\begin{pmatrix}1\cr0\end{pmatrix}
 =\begin{pmatrix}1&d\end{pmatrix}\bfV_j\begin{pmatrix}1\cr-d\end{pmatrix}\begin{pmatrix}1&0\end{pmatrix}\bfV_l\begin{pmatrix}1\cr0\end{pmatrix}
 \quad\forall{}j\not={}l
 \end{equation*}
 then
 \[
 \begin{pmatrix}1&d\end{pmatrix}
 \bfU_0\bfB_{n_1}\bfU_1\cdots\bfU_{k-1}\bfB_{n_k}\bfU_k
 \!\begin{pmatrix}1\cr0\end{pmatrix}
= \begin{pmatrix}1&d\end{pmatrix}
 \bfV_0\bfB_{n_1}\bfV_1\cdots\bfV_{k-1}\bfB_{n_k}\bfV_k
 \!\begin{pmatrix}1\cr0\end{pmatrix}
 \]

\noindent (2) If
 \begin{equation*}\begin{pmatrix}1&d\end{pmatrix}\bfU_j\begin{pmatrix}1\cr0\end{pmatrix}
 =\begin{pmatrix}1&d\end{pmatrix}\bfV_j\begin{pmatrix}1\cr0\end{pmatrix}
 \qquad\forall{}j
 \end{equation*}
  \begin{equation*}\begin{pmatrix}1&d\end{pmatrix}\bfU_l\begin{pmatrix}1\cr-d\end{pmatrix}\begin{pmatrix}1&0\end{pmatrix}\bfU_j\begin{pmatrix}1\cr0\end{pmatrix}
 =\begin{pmatrix}1&d\end{pmatrix}\bfV_j\begin{pmatrix}1\cr-d\end{pmatrix}\begin{pmatrix}1&0\end{pmatrix}\bfV_l\begin{pmatrix}1\cr0\end{pmatrix}
 \quad\forall{}j\not={}l
 \end{equation*}
 then
 \[
 \begin{pmatrix}1&d\end{pmatrix}
 \bfU_0\bfB_{n_1}\bfU_1\cdots\bfU_{k-1}\bfB_{n_k}\bfU_k
 \begin{pmatrix}1\cr0\end{pmatrix}
\!=\! \begin{pmatrix}1&d\end{pmatrix}
 \bfV_k\bfB_{n_k}\bfV_{k-1}\cdots\bfV_1\bfB_{n_1}\bfV_0
 \begin{pmatrix}1\cr0\end{pmatrix}
 \]
\end{lemma}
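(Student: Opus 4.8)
The plan is to linearise the whole matrix product by expanding each factor $\bfB_{n_j}$ through the rank-one identity of Lemma~\ref{Bproperties}(iv). To avoid denominators I would multiply the asserted identity through by $(id)^k$ and substitute, for each $j$,
\[
id\cdot\bfB_{n_j}=u^{n_j}\begin{pmatrix}1\cr-d\end{pmatrix}\begin{pmatrix}1&0\end{pmatrix}-u^{-n_j}\begin{pmatrix}1\cr0\end{pmatrix}\begin{pmatrix}1&d\end{pmatrix},
\]
so that each $\bfB_{n_j}$ contributes one of two rank-one dyads, which I label $P$ (coefficient $u^{n_j}$, column $\begin{pmatrix}1\cr-d\end{pmatrix}$, row $\begin{pmatrix}1&0\end{pmatrix}$) and $Q$ (coefficient $-u^{-n_j}$, column $\begin{pmatrix}1\cr0\end{pmatrix}$, row $\begin{pmatrix}1&d\end{pmatrix}$). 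Expanding, the left-hand side becomes a sum over the $2^k$ choices $\epsilon\in\{P,Q\}^k$; since every dyad is a column times a row, the term for a fixed $\epsilon$ splits at every $\bfB$ into a product of scalar bilinear forms, one per $\bfU_j$, namely (row from $\epsilon_j$)$\,\bfU_j\,$(column from $\epsilon_{j+1}$), where the outer $\begin{pmatrix}1&d\end{pmatrix}$ and $\begin{pmatrix}1\cr0\end{pmatrix}$ serve as a virtual $Q$-row at the far left and $Q$-column at the far right ($\epsilon_0=\epsilon_{k+1}=Q$). The scalar coefficient of a term depends only on $\epsilon$ and the $n_j$, hence is the same on the $\bfU$- and $\bfV$-sides, so it suffices to match the two sides term by term for each fixed $\epsilon$.

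For a fixed $\epsilon$ the form attached to $\bfU_j$ is determined by the pair $(\epsilon_j,\epsilon_{j+1})$. The possibility $(Q,Q)$ gives $\begin{pmatrix}1&d\end{pmatrix}\bfU_j\begin{pmatrix}1\cr0\end{pmatrix}$, preserved under $\bfU_j\mapsto\bfV_j$ by the first hypothesis; $(P,P)$ gives $\begin{pmatrix}1&0\end{pmatrix}\bfU_j\begin{pmatrix}1\cr-d\end{pmatrix}$, which by Lemma~\ref{Bproperties}(vii) is the image under $u\mapsto u^{-1}$ of $\begin{pmatrix}1&d\end{pmatrix}\bfU_j\begin{pmatrix}1\cr0\end{pmatrix}$ and is therefore also preserved, since the first hypothesis is an identity of Laurent polynomials in $u$ to which $u\mapsto u^{-1}$ may be applied. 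The remaining possibilities $(Q,P)$ and $(P,Q)$ give $\begin{pmatrix}1&d\end{pmatrix}\bfU_j\begin{pmatrix}1\cr-d\end{pmatrix}$ and $\begin{pmatrix}1&0\end{pmatrix}\bfU_j\begin{pmatrix}1\cr0\end{pmatrix}$, the two factors occurring in the second hypothesis, which need not be preserved individually.

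The key combinatorial observation, which I expect to be the main obstacle, is that these last two forms always occur in matched pairs. Grouping $\epsilon$ into maximal runs of $P$'s, a run occupying positions $m,\dots,m'$ produces exactly one $(Q,P)$-form, at $\bfU_{m-1}$, exactly one $(P,Q)$-form, at $\bfU_{m'}$, and $(P,P)$-forms at the interior indices; distinct runs use disjoint index sets and every index not so used carries a $(Q,Q)$-form. Since $m-1<m\le m'$ the indices $m-1$ and $m'$ are distinct, so the product of the $(Q,P)$-form at $\bfU_{m-1}$ and the $(P,Q)$-form at $\bfU_{m'}$ is preserved by the second hypothesis with $j=m-1$, $l=m'$. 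Hence in each term every factor is either individually preserved (the $(Q,Q)$- and $(P,P)$-forms, via the first hypothesis and Lemma~\ref{Bproperties}(vii)) or belongs to a run-pair that is preserved (the $(Q,P)$- and $(P,Q)$-forms, via the second hypothesis); the term, and so the whole sum, is unchanged by $\bfU_j\mapsto\bfV_j$, proving (1).

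For part (2) I would expand both sides by the same dyad rule, applying it also to the reversed product $\bfV_k\bfB_{n_k}\cdots\bfB_{n_1}\bfV_0$. Reversal interchanges the left and right neighbours of each $\bfV_j$, so for fixed $\epsilon$ the form attached to $\bfV_j$ is governed by the reversed pair $(\epsilon_{j+1},\epsilon_j)$; thus the $(Q,Q)$- and $(P,P)$-forms still match index by index, while within a run the two endpoints swap roles, so the $(Q,P)$-form at $\bfU_{m-1}$ and $(P,Q)$-form at $\bfU_{m'}$ must equal the $(P,Q)$-form at $\bfV_{m-1}$ and $(Q,P)$-form at $\bfV_{m'}$. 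This is precisely the second hypothesis of part (2) with $l=m-1$, $j=m'$, and since the coefficient of each $\epsilon$ is again identical on both sides, the terms agree, completing the proof.
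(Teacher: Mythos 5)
Your proposal is correct and follows essentially the same route as the paper's own proof: expand each $\bfB_{n_j}$ via the rank-one identity of Lemma \ref{Bproperties}(iv) into $2^k$ terms, classify the resulting scalar factors by the adjacent pair of dyad choices, handle the $(Q,Q)$ and $(P,P)$ forms by the first hypothesis together with Lemma \ref{Bproperties}(vii), and pair the $(Q,P)$/$(P,Q)$ forms at the two endpoints of each maximal run of $P$'s using the second hypothesis (with the index reversal in part (2)). Your write-up is in fact somewhat more explicit than the paper's about the run structure and the exact choice of indices $j=m'$, $l=m-1$.
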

\begin{proof}
The proof proceeds by replacing each $\bfB_{n_j}$ in the two sides of the equality to be proved by a combination of matrix products using (iv). This results in $(id)^k\cdot\begin{pmatrix}1&d\end{pmatrix}
 \bfU_0\bfB_{n_1}\bfU_1\cdots\bfU_{k-1}\bfB_{n_k}\bfU_k
 \begin{pmatrix}1\cr0\end{pmatrix}$ being replaced by a sum of $2^k$ terms, each of which is of the form $\prod\limits_j\epsilon_j\cdot{}u^{\sum_j\epsilon_jn_j}$ (for $\epsilon_j\in\{1,-1\}$) times a product of $(k+1)$ scalars, each of which is of one of the forms
 \[
 \begin{pmatrix}1&d\end{pmatrix}\bfU_j\begin{pmatrix}1\cr0\end{pmatrix},\quad
 \begin{pmatrix}1&d\end{pmatrix}\bfU_j\begin{pmatrix}1\cr-d\end{pmatrix},\quad
 \begin{pmatrix}1&0\end{pmatrix}\bfU_j\begin{pmatrix}1\cr0\end{pmatrix},\quad
 \begin{pmatrix}1&0\end{pmatrix}\bfU_j\begin{pmatrix}1\cr-d\end{pmatrix}
 \]
according as $(\epsilon_j,\epsilon_{j+1})=(-1,-1),(-1,1),(1,-1),(1,1)$, respectively, where we extend $\epsilon_j$ to $j=0,k+1$ by setting $\epsilon_0=\epsilon_{k+1}=-1$. Similarly for the right hand side of the equality to be proved. On comparison, the first condition ensures that $\begin{pmatrix}1&d\end{pmatrix}\bfU_j\begin{pmatrix}1\cr0\end{pmatrix}
=\begin{pmatrix}1&d\end{pmatrix}\bfV_j\begin{pmatrix}1\cr0\end{pmatrix}$. By Lemma \ref{Bproperties}(vii), replacing $u\rightarrow{}u^{-1}$ gives also
$\begin{pmatrix}1&0\end{pmatrix}\bfU_j\begin{pmatrix}1\cr-d\end{pmatrix}
=\begin{pmatrix}1&0\end{pmatrix}\bfV_j\begin{pmatrix}1\cr-d\end{pmatrix}$. Since $\epsilon_0=\epsilon_{k+1}$, adjacent unequal terms in the sequence $\epsilon_j$ come in pairs $(-1,1)$ followed (possibly after a string of 1's) by $(1,-1)$; the second condition allows the product of a pair of scalars associated with $\bfU_j$'s to be equated with the corresponding product of scalars associated with $\bfV_j$'s (with order reversed in the case of (2)).
\end{proof}

\begin{definition}
Suppose that $\bfn\in\Z^r$ is an integer sequence. Denote by $-\bfn$ and $\backn$ respectively, the sequences obtained from $\bfn$ by reversing all the signs and by reversing order of the sequence, respectively. Define $\bfn^*=-\overset{\leftarrow}{\bfn}$.
\end{definition}

\begin{theorem}[\bf Template I]\label{firsttemplate} If two rational knots have integer sequence presentations related by the move
\[
(\bfn^{\epsilon_0},d_1,\bfn^{\epsilon_1},\ldots,d_k,\bfn^{\epsilon_k})
\longrightarrow
(\bfn^{\epsilon_k},d_k,\bfn^{\epsilon_{k-1}},\ldots,d_1,\bfn^{\epsilon_0})
\]
for some $d_1,\ldots,d_k\in\Z$, $\epsilon_0,\ldots,\epsilon_k\in\{1,*\}$ and integer sequence $\bfn$,
then their Jones polynomials coincide.
\end{theorem}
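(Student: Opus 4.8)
The plan is to reduce the claimed coincidence to an identity between the matrix products of Theorem \ref{Jonesrationalformula} and then to read that identity off directly from Key Lemma \ref{keylemma}(2). By Lemma \ref{almostsameJoneslemma} combined with Theorem \ref{Jonesrationalformula}, it suffices to show that the two sequences yield the same scalar $\begin{pmatrix}1&d\end{pmatrix}\bfB_{a_N}\cdots\bfB_{a_1}\begin{pmatrix}1\cr0\end{pmatrix}$, even only up to sign and a power of $u$; in fact I expect exact equality, with Lemma \ref{almostsameJoneslemma} invoked only to absorb the differing writhe normalisations of the two diagrams.

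First I would fix the dictionary between sequences and matrices. To an integer sequence $\bfs=(a_1,\ldots,a_N)$ I associate $P(\bfs)=\bfB_{a_1}\bfB_{a_2}\cdots\bfB_{a_N}$, so that concatenation of sequences corresponds to multiplication, $P(\bfs\cdot\bfs')=P(\bfs)P(\bfs')$. Writing $\bfU_j=P(\bfn^{\epsilon_j})$, the left-hand sequence of the move gives
\[
P\bigl(\bfn^{\epsilon_0},d_1,\bfn^{\epsilon_1},\ldots,d_k,\bfn^{\epsilon_k}\bigr)
=\bfU_0\bfB_{d_1}\bfU_1\cdots\bfU_{k-1}\bfB_{d_k}\bfU_k,
\]
while the right-hand sequence gives $\bfU_k\bfB_{d_k}\bfU_{k-1}\cdots\bfU_1\bfB_{d_1}\bfU_0$. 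These are precisely the two sides of Key Lemma \ref{keylemma}(2) with $n_j=d_j$ and with the choice $\bfV_j=\bfU_j$. Hence the theorem follows once the two hypotheses of \ref{keylemma}(2) are checked for this choice.

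The first hypothesis is automatic since $\bfV_j=\bfU_j$. For the second, set $F=\begin{pmatrix}1&d\end{pmatrix}P(\bfn)\begin{pmatrix}1\cr-d\end{pmatrix}$ and $G=\begin{pmatrix}1&0\end{pmatrix}P(\bfn)\begin{pmatrix}1\cr0\end{pmatrix}$. The crux is the behaviour of $P$ under the star operation: from $\bfB_n^*=-\bfB_{-n}$ (Lemma \ref{Bproperties}(i)) one computes $P(\bfn^*)=(-1)^r P(\bfn)^*$, where $r$ is the length of $\bfn$ and the sign is the \emph{same} for every block because $\bfn$ is held fixed. Feeding this into the two identities of Lemma \ref{Bproperties}(ii), namely $\begin{pmatrix}1&0\end{pmatrix}\bfC^*\begin{pmatrix}1\cr0\end{pmatrix}=\begin{pmatrix}1&0\end{pmatrix}\bfC\begin{pmatrix}1\cr0\end{pmatrix}$ and $\begin{pmatrix}1&d\end{pmatrix}\bfC^*\begin{pmatrix}1\cr-d\end{pmatrix}=\begin{pmatrix}1&d\end{pmatrix}\bfC\begin{pmatrix}1\cr-d\end{pmatrix}$, yields
\[
\begin{pmatrix}1&d\end{pmatrix}\bfU_j\begin{pmatrix}1\cr-d\end{pmatrix}=\sigma_j F,
\qquad
\begin{pmatrix}1&0\end{pmatrix}\bfU_j\begin{pmatrix}1\cr0\end{pmatrix}=\sigma_j G,
\]
with $\sigma_j=1$ if $\epsilon_j=1$ and $\sigma_j=(-1)^r$ if $\epsilon_j=*$. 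Thus both boundary scalars rescale by the \emph{same} factor $\sigma_j$ as $j$ varies, so the second hypothesis of \ref{keylemma}(2), which for $\bfV_j=\bfU_j$ reads $\bigl(\begin{pmatrix}1&d\end{pmatrix}\bfU_l\begin{pmatrix}1\cr-d\end{pmatrix}\bigr)\bigl(\begin{pmatrix}1&0\end{pmatrix}\bfU_j\begin{pmatrix}1\cr0\end{pmatrix}\bigr)=\bigl(\begin{pmatrix}1&d\end{pmatrix}\bfU_j\begin{pmatrix}1\cr-d\end{pmatrix}\bigr)\bigl(\begin{pmatrix}1&0\end{pmatrix}\bfU_l\begin{pmatrix}1\cr0\end{pmatrix}\bigr)$, becomes $\sigma_l\sigma_j FG=\sigma_j\sigma_l FG$ and holds trivially.

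With both hypotheses in hand, Key Lemma \ref{keylemma}(2) delivers the equality of the two matrix products, and Lemma \ref{almostsameJoneslemma} upgrades this to equality of the Jones polynomials. The real content is the single observation that the sequence-level operation $\bfn\mapsto\bfn^*=-\backn$ mirrors the conjugate transpose at the matrix level, forcing the two boundary scalars $F,G$ of each block to move in lockstep; the block reversal itself is done entirely by the Key Lemma. I expect the only points needing care to be the ordering conventions in the sequence-to-matrix dictionary and the consistency of the sign $(-1)^r$ across all blocks — both routine once $\bfn$ is fixed — rather than any genuine obstacle.
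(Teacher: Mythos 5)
Your proposal is correct and takes essentially the same route as the paper's own proof: reduce to the scalar matrix identity via Lemma \ref{almostsameJoneslemma} and Theorem \ref{Jonesrationalformula}, then apply Key Lemma \ref{keylemma}(2) with $\bfU_j=\bfV_j$ equal to the block matrices, the first hypothesis being automatic and the second following from Lemma \ref{Bproperties}(i),(ii). The only cosmetic difference is that you carry the sign $(-1)^r$ from $P(\bfn^*)=(-1)^rP(\bfn)^*$ explicitly and check it cancels, whereas the paper sets $\bfU_j=\bfV_j=\bfC^{\epsilon_j}$ without the sign and lets the common overall sign be absorbed by Lemma \ref{almostsameJoneslemma}.
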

\begin{proof}
Let $\bfC=\bfB_{n_1}\cdots\bfB_{n_r}$ where $\bfn=(n_1,\ldots,n_r)$.
By Lemma \ref{Bproperties}(i), $\bfC^*=(-1)^r\bfB_{-n_r}\cdots\bfB_{-n_1}$.  The theorem follows from the Key Lemma (2) with $\bfU_j=\bfV_j=\bfC^{\epsilon_j}$. The first condition is automatically satisfied since $\bfU_j=\bfV_j$. The second condition follows from the fact that in any case $\bfU_j,\bfV_j\in\{\bfC,\bfC^*\}$ so that
$\begin{pmatrix}1&d\end{pmatrix}\bfU_l\begin{pmatrix}1\cr-d\end{pmatrix}
=\begin{pmatrix}1&d\end{pmatrix}\bfV_j\begin{pmatrix}1\cr-d\end{pmatrix}$
and $\begin{pmatrix}1&0\end{pmatrix}\bfU_j\begin{pmatrix}1\cr0\end{pmatrix}
 =\begin{pmatrix}1&0\end{pmatrix}\bfV_l\begin{pmatrix}1\cr0\end{pmatrix}$
by Lemma \ref{Bproperties}(ii).
\end{proof}

\begin{example}
Using $\bfn=(2,3)$, $k=2$, $\epsilon_0=\epsilon_1=\epsilon_2=1$, $d_1=0$, $d_2=-1$ in the first template leads to a Jones rational coincidence between the integer sequences $(2,3,0,2,3,-1,2,3)$ and $(2,3,-1,2,3,0,2,3)$. The associated continued fractions are
\[
2-\frac{1}{3-}\frac{1}{0-}\frac{1}{2-}\frac{1}{3-}\frac{1}{-1-}\frac{1}{2-}\frac13=\frac{245}{137},\qquad
2-\frac{1}{3-}\frac{1}{-1-}\frac{1}{2-}\frac{1}{3-}\frac{1}{0-}\frac{1}{2-}\frac13=\frac{245}{142}
\]
giving coincidence of the Jones polynomials of $K_\frac{245}{137}$ and $K_\frac{245}{142}$.
\end{example}

\begin{example}\label{tripleex}
Using $\bfn=(2,-2)$, $k=2$, $\epsilon=(1,1,1)$, $d_1=1$, $d_2=4$ in the first template generates the Jones rational coincidence given by the integer sequences \[(2,-2,1,2,-2,4,2,-2)\leftrightarrow(2,-2,4,2,-2,1,2-,2)\] that is between $K_\frac{495}{218}$ and $K_\frac{495}{203}$. On the other hand, the first template with $\bfn=(2,5,1,0)$, $k=2$, $\epsilon=(1,1,*)$, $d_1=0$, $d_2=-3$ generates the Jones  rational coincidence given by the integer sequences \[(2,5,1,0,0,2,5,1,0,-3,0,\!-1,\!-5,\!-2)\leftrightarrow (0,\!-1,\!-5,\!-2,-3,2,5,1,0,0,2,5,1,0)\] that is between the rational knots $K_\frac{495}{302}$ and $K_\frac{495}{383}$. Since $302\equiv218^{-1}$ (mod 495), this gives a Jones rational triple coincidence $K_\frac{495}{218}=K_\frac{495}{302}$, $K_\frac{495}{203}$ and $K_\frac{495}{383}$.
\end{example}

\bigskip\noindent Let $\backn$, $\bfn$ be denoted by $\overset{\epsilon}{\bfn}$ where $\epsilon=\leftarrow,\rightarrow$, respectively.

\begin{theorem}[\bf Template II]\label{secondtemplate}
If two rational knots have integer sequence presentations related by the move
\[
(\overset{\epsilon_0}{\bfn},m_0,-\overset{\phi_0}{\bfn},d_1,\ldots,d_k,\overset{\epsilon_k}{\bfn},m_k,-\overset{\phi_k}{\bfn})
\longrightarrow
(\overset{\epsilon_0}{\bfn^*},m_0,-\overset{\phi_0}{\bfn^*},d_1,\ldots,d_k,\overset{\epsilon_k}{\bfn^*},m_k,-\overset{\phi_k}{\bfn^*})
\]
for some $m_0,\ldots,{}m_k,d_1,\ldots,d_k\in\Z$, $\epsilon_0,\ldots,\epsilon_k,\phi_0,\ldots,\phi_k\in\{\leftarrow,\rightarrow\}$ and integer sequence $\bfn$,
then their Jones polynomials coincide.
\end{theorem}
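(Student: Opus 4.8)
The plan is to follow the proof of Template~I but to invoke part~(1) of the Key Lemma~\ref{keylemma} rather than part~(2), since the move preserves the left-to-right order of the super-blocks and alters only $\bfn$ internally. By Theorem~\ref{Jonesrationalformula}, Lemma~\ref{almostsameJoneslemma} and the remark after it, it suffices to show that the two scalars $\begin{pmatrix}1&d\end{pmatrix}(\cdots)\begin{pmatrix}1\cr0\end{pmatrix}$ coincide up to sign and a power of $u$; by Lemma~\ref{Bproperties}(v) the product may be read in the natural order of the sequence. Put $\bfC=\bfB_{n_1}\cdots\bfB_{n_r}$ for the block of $\bfn$. Then Lemma~\ref{Bproperties}(i) gives the block matrices $\overset{\rightarrow}{\bfn}\mapsto\bfC$, $\overset{\leftarrow}{\bfn}\mapsto\backC$, $-\overset{\rightarrow}{\bfn}\mapsto(-1)^r\backC^*$, $-\overset{\leftarrow}{\bfn}\mapsto(-1)^r\bfC^*$, together with the corresponding $\bfn^*$-blocks. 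Letting $\bfU_j,\bfV_j$ be the matrices of the super-blocks $(\overset{\epsilon_j}{\bfn},m_j,-\overset{\phi_j}{\bfn})$ and $(\overset{\epsilon_j}{\bfn^*},m_j,-\overset{\phi_j}{\bfn^*})$, a direct check shows that each carries exactly one factor $(-1)^r$, so these cancel; after removing them the reduced block $\hat{\bfU}_j=X\bfB_{m_j}Y$ has $X\in\{\bfC,\backC\}$ (unstarred) and $Y\in\{\bfC^*,\backC^*\}$ (starred), and $\hat{\bfV}_j$ is obtained from it by toggling the $*$ on both flanking factors (the involution $\bfC\leftrightarrow\bfC^*$, $\backC\leftrightarrow\backC^*$). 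The two products are then $\bfU_0\bfB_{d_1}\bfU_1\cdots\bfB_{d_k}\bfU_k$ and $\bfV_0\bfB_{d_1}\bfV_1\cdots\bfB_{d_k}\bfV_k$, exactly the shape required by Key Lemma~\ref{keylemma}(1) with $\bfB_{d_j}$ in the role of $\bfB_{n_j}$.

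The engine for both hypotheses is to expand the central twist $\bfB_{m_j}$ by Lemma~\ref{Bproperties}(iv) and to reduce every resulting boundary scalar obtained by bordering $X\in\{\bfC,\backC,\bfC^*,\backC^*\}$ by the rows $\begin{pmatrix}1&d\end{pmatrix},\begin{pmatrix}1&0\end{pmatrix}$ and the columns $\begin{pmatrix}1\cr0\end{pmatrix},\begin{pmatrix}1\cr-d\end{pmatrix}$, via parts (ii), (v), (vi), (vii), to one of $P=\begin{pmatrix}1&0\end{pmatrix}\bfC\begin{pmatrix}1\cr0\end{pmatrix}$, $Q=\begin{pmatrix}1&0\end{pmatrix}\backC\begin{pmatrix}1\cr0\end{pmatrix}$, $S=\begin{pmatrix}1&d\end{pmatrix}\bfC\begin{pmatrix}1\cr0\end{pmatrix}$, $\tilde S=\begin{pmatrix}1&d\end{pmatrix}\bfC^*\begin{pmatrix}1\cr0\end{pmatrix}$. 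Since $X$ and $Y$ contribute their border scalars independently of $\bfB_{m_j}$, passing from $\bfU_j$ to $\bfV_j$ amounts, at the level of these reduced expressions, to the formal interchange $S\leftrightarrow\tilde S$, leaving $P,Q$ and the variables $u,d$ untouched (by (ii) and (v)). For the first hypothesis one checks, in each of the four types $(\epsilon,\phi)$, that $\begin{pmatrix}1&d\end{pmatrix}\hat{\bfU}_j\begin{pmatrix}1\cr0\end{pmatrix}$ is an expression in which $S$ and $\tilde S$ enter only through the product $S\tilde S$ (for type $(\rightarrow,\rightarrow)$ it is $u^{m_j}(1-d^2)Q^2-u^{-m_j}S\tilde S$, up to the factor $(id)^{-1}$); it is therefore fixed by the interchange, which is the first hypothesis.

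The hard part will be the second hypothesis, $\begin{pmatrix}1&d\end{pmatrix}\bfU_j\begin{pmatrix}1\cr-d\end{pmatrix}\begin{pmatrix}1&0\end{pmatrix}\bfU_l\begin{pmatrix}1\cr0\end{pmatrix}=\begin{pmatrix}1&d\end{pmatrix}\bfV_j\begin{pmatrix}1\cr-d\end{pmatrix}\begin{pmatrix}1&0\end{pmatrix}\bfV_l\begin{pmatrix}1\cr0\end{pmatrix}$ for $j\neq l$, where $j$ and $l$ may have different types. Here the same expansion exhibits an asymmetry forced by the fact that $X$ is unstarred while $Y$ is starred: the form $\begin{pmatrix}1&d\end{pmatrix}\hat{\bfU}_j\begin{pmatrix}1\cr-d\end{pmatrix}$ comes out as $S$ times a remainder lying in $P,Q,u^{\pm m_j}$, whereas the entry $\begin{pmatrix}1&0\end{pmatrix}\hat{\bfU}_l\begin{pmatrix}1\cr0\end{pmatrix}$ comes out as $\tilde S$ times its own such remainder. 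The interchange $S\leftrightarrow\tilde S$ fixes both remainders and merely swaps the distinguished factors, so the left product of the hypothesis carries the factor $S\tilde S$ while the right carries $\tilde S S$, and these agree. I expect the genuine labour to be confirming that this ``$S$ in the $\begin{pmatrix}1&d\end{pmatrix}\cdots\begin{pmatrix}1\cr-d\end{pmatrix}$ slot, $\tilde S$ in the $(1,1)$ slot'' pattern holds uniformly across all four types and every pairing of (possibly distinct) types at $j$ and $l$; this is a finite but delicate case check, most safely handled by computing the eight bordered scalars of $\bfC,\backC,\bfC^*,\backC^*$ once and reading everything off that table. Granting both hypotheses, Key Lemma~\ref{keylemma}(1) equates the two matrix products, hence the two Jones polynomials.
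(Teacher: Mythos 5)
Your proposal is correct and takes essentially the same route as the paper: Key Lemma~\ref{keylemma}(1) applied to the same super-block matrices $\bfU_j,\bfV_j$, with both hypotheses verified by expanding the central twist $\bfB_{m_j}$ via Lemma~\ref{Bproperties}(iv) and reducing the resulting bordered scalars using parts (ii), (v), (vi), (vii). The only organizational differences are that the paper establishes the first hypothesis by a case split ($\epsilon_j=\phi_j$ handled via (v), opposite directions via Theorem~\ref{firsttemplate} with $k=1$) rather than your uniform $S\tilde S$-symmetry, and establishes the second by expanding both central twists simultaneously and matching four products of scalars rather than by your factorization into a distinguished factor ($S$ or $\tilde S$) times a star-invariant remainder; the pattern you defer as a finite case check does hold uniformly across the types, so your argument is complete in substance.
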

\begin{proof}
 The template follows from the Key Lemma (1) where $\bfU_j$ and $\bfV_j$ are the products of $\bfB_n$ matrices associated with the integer sequences $(\overset{\epsilon_j}{\bfn},m_j,-\overset{\phi_j}{\bfn})$ and
$(\overset{\epsilon_j}{\bfn^*},m_j,-\overset{\phi_j}{\bfn^*})$. It remains to verify the two conditions in Lemma \ref{keylemma}(1). For the first one, either $\epsilon_j=\phi_j$ in which case
\[
\begin{pmatrix}1&d\end{pmatrix}\bfC\bfB_m\bfC^-\begin{pmatrix}1\cr0\end{pmatrix}
    =\begin{pmatrix}1&d\end{pmatrix}\bfC^*\bfB_m\overset{\leftarrow}{\bfC}    \begin{pmatrix}1\cr0\end{pmatrix}
\]
follows from Lemma \ref{Bproperties}(v), since the sequences $\bfC\bfB_n\bfC^-$, $\bfC^*\bfB_n\overset{\leftarrow}{\bfC}$ are exact reversals of each other; or $\epsilon_j$, $\phi_j$ are in opposite directions, in which case,
\[
\begin{pmatrix}1&d\end{pmatrix}\bfC\bfB_m\bfC^*\begin{pmatrix}1\cr0\end{pmatrix}
    =\begin{pmatrix}1&d\end{pmatrix}\bfC^*\bfB_m\bfC    \begin{pmatrix}1\cr0\end{pmatrix}
\]
follows from Theorem \ref{firsttemplate} with $k=1$. The second condition to be verified is that
\begin{align*}
&\begin{pmatrix}1&d\end{pmatrix}\bfB(\overset{\epsilon}\bfn)\bfB_m\bfB(-\overset{\phi}{\bfn})
\begin{pmatrix}1\cr-d\end{pmatrix}\cdot
\begin{pmatrix}1&0\end{pmatrix}\bfB(\overset{\epsilon'}\bfn)\bfB_k\bfB(-\overset{\phi'}{\bfn})
\begin{pmatrix}1\cr0\end{pmatrix}\cr
 &=\begin{pmatrix}1&d\end{pmatrix}\bfB(\overset{\epsilon}{\bfn^*})\bfB_m\bfB(-\overset{\phi}{\bfn^*})
\begin{pmatrix}1\cr-d\end{pmatrix}\cdot
\begin{pmatrix}1&0\end{pmatrix}\bfB(\overset{\epsilon'}{\bfn^*})\bfB_k\bfB(-\overset{\phi'}{\bfn^*})
\begin{pmatrix}1\cr0\end{pmatrix}
 \end{align*}
where $\bfB(\bfn)\equiv\bfB_{n_1}\cdots\bfB_{n_r}$. This is checked using Lemma \ref{Bproperties}(iv) to expand out the $\bfB_m$ and $\bfB_k$ terms on both sides. Thus $(-d^2)$ times the expression on either side of the equality to be proved becomes a sum  of four terms with external factors of $\pm{}u^{\pm{}m\pm{}k}$. The four matching individual terms are found to be equal, each being a product of four scalars. For example, the $-u^{m-k}$ term on the left hand side is
\[
\begin{pmatrix}1&d\end{pmatrix}\bfB(\overset{\epsilon}\bfn)
  \begin{pmatrix}1\cr-d\end{pmatrix}\cdot\begin{pmatrix}1&0\end{pmatrix}\bfB(-\overset{\phi}{\bfn})
\begin{pmatrix}1\cr-d\end{pmatrix}\cdot
\begin{pmatrix}1&0\end{pmatrix}\bfB(\overset{\epsilon'}\bfn)
 \begin{pmatrix}1\cr0\end{pmatrix}\cdot\begin{pmatrix}1&d\end{pmatrix}\bfB(-\overset{\phi'}{\bfn})
\begin{pmatrix}1\cr0\end{pmatrix}
\]
while on the right hand side it is the same with $\bfn^*$ replacing $\bfn$. The matching of first and third factors is via
\[
\begin{pmatrix}1&d\end{pmatrix}\bfC\begin{pmatrix}1\cr-d\end{pmatrix}
=\begin{pmatrix}1&d\end{pmatrix}\bfC^*\begin{pmatrix}1\cr-d\end{pmatrix},\quad
\begin{pmatrix}1&0\end{pmatrix}\bfC\begin{pmatrix}1\cr0\end{pmatrix}
=\begin{pmatrix}1&0\end{pmatrix}\bfC^*\begin{pmatrix}1\cr0\end{pmatrix}
\]
from Lemma \ref{Bproperties}(ii) (where $\bfC=\bfB(\overset{\epsilon}\bfn),\bfB(\overset{\epsilon'}\bfn)$ respectively). The second factor can be transformed into a type similar to the fourth using Lemma \ref{Bproperties}(vii); the product of the second and fourth factors is now
\[
\begin{pmatrix}1&d\end{pmatrix}\bfB(-\overset{\phi}{\bfn^*})
\begin{pmatrix}1\cr0\end{pmatrix}
\cdot\begin{pmatrix}1&d\end{pmatrix}\bfB(-\overset{\phi'}{\bfn})
\begin{pmatrix}1\cr0\end{pmatrix}
\]
which by Lemma \ref{Bproperties} is independent of $\phi,\phi'$ and is thus unchanged when $\bfn$ is replaced by $\bfn^*$  (the two factors interchanging).
\end{proof}

\begin{example}
 Take $\bfn=(4,2)$, $k=1$, $d_1=0$, $m=(-1,0)$ with $\epsilon=\phi=(\rightarrow,\rightarrow)$. The second template now gives a Jones rational coincidence between the integer sequences $(4,2,-1,-4,-2,0,4,2,0,-4,-2)$, $(-2,-4,-1,2,4,0,-2,-4,0,2,4)$, that is between the rational knots $K_\frac{329}{89}$ and $K_\frac{329}{-193}$.
\end{example}

\begin{remark}
All the explicit examples of Jones rational equivalences given in the text are not mutants as can be verified by computing their HOMFLYPT polynomial (\cite{HOMFLY}, \cite{PT}). By using the freedoms in the Templates of Theorems \ref{firsttemplate}, \ref{secondtemplate}, it can be seen that arbitrarily large (but necessarily finite) families of rational knots sharing the same Jones polynomial can be constructed; this was already known \cite{Kanenobu86}, where a special case of the construction given here was presented.
\end{remark}

\section{Pivoting pairs}
Out of the 80,317 rational knots up to determinant 899, there were found in \cite{R}, 223 Jones rational coincidences (up to mirror image), two of which are amphicheiral pairs and three of which are triplets. Out of the 220 `simple' coincidences, 144 can be identified as examples of Template I and 177 as examples of Template II, while 108 are examples of both. Note that since there are an infinite number of continued fraction representations of a rational corresponding to a particular rational knot, it is possible that even some of those identified as examples of one template, may be as-yet-unfound examples of the other template. On the other hand, of the three triple coincidences, these can be established by an `intersecting pair' of coincidences as in Example \ref{tripleex}; in two cases, both come from Template I while in the third case one comes from Template I and the other from Template II.

\begin{example}
The first Jones rational coincidence (smallest determinant) is between $K_\frac{49}{22}=10_{35}$ and $K_\frac{49}{36}=10^*_{22}$ (see \cite{atlas}). It appears as an example of Template I with $\bfn=(2,4)$, $k=1$, $d_1=1$, $\epsilon=(1,*)$ which gives the integer sequences $(2,4,1,-4,-2)$ and $(-4,-2,1,2,4)$; this is trivially also an example of Template II with $\bfn=(2,4)$, $k=0$, $m_0=1$, $\epsilon_0=\rightarrow$ and $\phi_0=\leftarrow$.
\end{example}

\begin{example}
Here is an example of a Jones rational equivalence which can be explained both by Template I
and Template II, but where the generating continued fraction expansions are quite different in
the two cases. Template I applied with $\bfn=(2,6)$, $k=2$, $d_1=0$, $d_2=1$,
$\epsilon=(1,1,1)$ gives the integer sequences $(2,6,0,2,6,1,2,6)$ and $(2,6,1,2,6,0,2,6)$
whose continued fractions evaluate to $\frac{275}{147}$ and $\frac{275}{158}$ respectively.
Template II applied with $\bfn=(4,3)$, $k=2$, ${\bf m}=(0,0,0)$, $d_1=0$, $d_2=-1$,
$\epsilon=\phi=(\rightarrow,\rightarrow,\rightarrow)$ gives the integer sequences
$(4,3,0,-4,-3,0,4,3,0,-4,-3,-1,4,3,0,-4,-3)$ and $(-3,-4,0,3,4,0,-3,-4,0,3,4,-1,-3,-4,0,3,4)$
whose continued fraction evaluations are $\frac{275}{58}$ and $-\frac{275}{117}$,
respectively. Note that $147\cdot58\equiv1\ (275)$ while $158\equiv-117\ (275)$ so that these
templates both give rise to the same Jones rational coincidence.
\end{example}

This leaves 11 Jones polynomial coincidences amongst rational knots with determinant $<900$ which are still unaccounted for by the templates of Theorems \ref{firsttemplate}, \ref{secondtemplate}.

\begin{example}\label{firstpivotcase}
 The first case of a Jones rational coincidence which doesn't seem to appear as an example of either template from \S6 has determinant 377, namely $K_\frac{377}{70}$ and $K_\frac{377}{278}$. Integer sequences for these rational knots are \[(5,-2,-4,-4,0,5,-3,-6,-4),\quad(5,-3,-6,-4,0,5,-2,-4,-4)\]
\end{example}

\noindent This first example was what led to the definition of {\sl pivoting pairs} below. Before this, we make some observations about the form of a matrix which can be expressed as a product of $\bfB_n$ matrices.

\begin{remark}
Suppose that $\bfC$ is a product of matrices $\bfB_n$. Since $\det\bfB_n=1$ thus $\det\bfC=1$.  Recall that $\overset{\leftarrow}{\bfC}=\bfC^T_{u\rightarrow{}u^{-1}}$ and so by
 Lemma \ref{Bproperties}(v),(vi),
\begin{equation}\label{eq:Cprop}\begin{array}{rcl}
\begin{pmatrix}1&d\end{pmatrix}\bfC\begin{pmatrix}1\cr0\end{pmatrix}
    &=&\begin{pmatrix}1&d\end{pmatrix}\bfC^T_{u\rightarrow{}u^{-1}}\begin{pmatrix}1\cr0\end{pmatrix}\cr
\begin{pmatrix}1&d\end{pmatrix}\bfC\begin{pmatrix}1\cr-d\end{pmatrix}
&=&(1-d^2)\cdot\begin{pmatrix}1&0\end{pmatrix}\bfC^T_{u\rightarrow{}u^{-1}}\begin{pmatrix}1\cr0\end{pmatrix}
\end{array}
\end{equation}
Matrices $\bfC=\begin{pmatrix}\alpha&i\beta\cr{}i\gamma&\delta\end{pmatrix}$ satisfying (\ref{eq:Cprop}) have
\[\alpha+id\gamma=\alpha'+id\beta',\quad\alpha-id\beta+id\gamma-d^2\delta=(1-d^2)\alpha'\]
where the prime denotes the image under $u\rightarrow{}u^{-1}$. It follows that such a matrix is determined by the entries in the first row, $\gamma=\beta'+\frac{\alpha'-\alpha}{u-u^{-1}}$ and $\delta=\alpha'+\frac{\beta-\beta'}{u-u^{-1}}$.
\begin{equation}\label{eq:Cform}
\bfC=\begin{pmatrix}
\alpha&i\beta\cr
i\beta'+i\frac{\alpha'-\alpha}{u-u^{-1}}&\alpha'+\frac{\beta-\beta'}{u-u^{-1}}
\end{pmatrix}
\hbox{ with }\alpha\alpha'+\beta\beta'+\frac{\alpha'\beta-\alpha\beta'}{u-u^{-1}}=1
\end{equation}
Matrices of the form (\ref{eq:Cform}) where $\alpha,\beta\in\Z[u,u^{-1}]$ form a group, $\gpC$, containing as a subgroup $\gpB$, the set of those matrices which can be written as a product of $\bfB_n$ matrices.
Such matrices which arise as product of $\bfB_n$ matrices will have either $\alpha\in\Z[t,t^{-1}]$, $\beta\in{}u\Z[t,t^{-1}]$ or $\alpha\in{}u\Z[t,t^{-1}]$, $\beta\in\Z[t,t^{-1}]$, according as the number of even indices ($n$ even) in the product is even or odd, respectively. This follows from the same fact about individual $\bfB_n$ matrices and its invariance under matrix product.
   Observe that the condition on $\alpha,\beta$ in (\ref{eq:Cform}) can be reformulated as
\[
(\alpha+(u^{-1}-u)\beta)(\alpha'+(u-u^{-1})\beta')=(u^2-1+u^{-2})\alpha\alpha'-(u-u^{-1})^2
\]
while the second factor on the left hand side is, by Theorem \ref{Jonesrationalformula}, the Jones polynomial of the associated rational knot, up to a sign and power of $u$. Recalling that $t=-u^{-2}$, we arrive at the following corollary, which seems to be new.
\end{remark}

\begin{corollary}\label{Jonesrationalform}
 The Jones polynomial of a rational knot $K$ satisfies \[V_K(t)V_K(t^{-1})=(2+t+t^{-1})-(1+t+t^{-1})a(t)a(t^{-1})\] where $a(t)\in\Z[t,t^{-1}]$.
\end{corollary}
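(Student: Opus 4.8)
The plan is to read off both $V_K(t)$ and $V_K(t^{-1})$ from Theorem~\ref{Jonesrationalformula} as the two specializations, under $u\mapsto u^{-1}$, of a single $u$-expression, and then to feed them into the reformulated relation on $\alpha,\beta$ recorded just before the corollary. First I would set $\bfC=\bfB_{n_k}\cdots\bfB_{n_1}$; being a product of $\bfB_n$ matrices it has the shape (\ref{eq:Cform}). A direct computation, using $id=u-u^{-1}$ together with the formula $\gamma=\beta'+\frac{\alpha'-\alpha}{u-u^{-1}}$ from (\ref{eq:Cform}), gives $\begin{pmatrix}1&d\end{pmatrix}\bfC\begin{pmatrix}1\cr0\end{pmatrix}=\alpha'+(u-u^{-1})\beta'$. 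Hence Theorem~\ref{Jonesrationalformula} reads $V_K(t)=\lambda\cdot\big(\alpha'+(u-u^{-1})\beta'\big)$, where $\lambda=(-1)^a u^b$ and $a,b\in\Z$ are the (integral) exponents in the normalisation.

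Next I would track the involution $u\mapsto u^{-1}$. Since $t=-u^{-2}$, this involution sends $t\mapsto t^{-1}$, so applying it to the identity for $V_K(t)$ (both sides read as functions of $u$) yields $V_K(t^{-1})=\lambda'\cdot\big(\alpha+(u^{-1}-u)\beta\big)$ with $\lambda'=\lambda|_{u\to u^{-1}}=(-1)^a u^{-b}$. Writing $P=\alpha+(u^{-1}-u)\beta$, so that $P|_{u\to u^{-1}}=\alpha'+(u-u^{-1})\beta'$, the product becomes $V_K(t)V_K(t^{-1})=\lambda\lambda'\cdot P\cdot P|_{u\to u^{-1}}$. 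The prefactor collapses, $\lambda\lambda'=(-1)^{2a}u^{\,b-b}=1$, leaving $V_K(t)V_K(t^{-1})=P\cdot P|_{u\to u^{-1}}$.

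Now the reformulated relation preceding the corollary is exactly $P\cdot P|_{u\to u^{-1}}=(u^2-1+u^{-2})\alpha\alpha'-(u-u^{-1})^2$, so I would substitute $u^2=-t^{-1}$, $u^{-2}=-t$, turning the two scalar factors into $u^2-1+u^{-2}=-(1+t+t^{-1})$ and $(u-u^{-1})^2=-(2+t+t^{-1})$. This gives $V_K(t)V_K(t^{-1})=(2+t+t^{-1})-(1+t+t^{-1})\alpha\alpha'$.

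It remains to write $\alpha\alpha'=a(t)a(t^{-1})$ with $a\in\Z[t,t^{-1}]$, and here I would use the parity observation stated just before the corollary: either $\alpha\in\Z[t,t^{-1}]$ or $\alpha\in u\,\Z[t,t^{-1}]$. Since $u\mapsto u^{-1}$ acts on $\Z[t,t^{-1}]$ as $t\mapsto t^{-1}$, in the first case $\alpha'=\alpha(t^{-1})$ and one takes $a(t)=\alpha$; in the second case, writing $\alpha=u\,b(t)$ with $b\in\Z[t,t^{-1}]$ gives $\alpha'=u^{-1}b(t^{-1})$, so $\alpha\alpha'=b(t)b(t^{-1})$ and one takes $a=b$. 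Either way $\alpha\alpha'=a(t)a(t^{-1})$, which finishes the proof. The main obstacle is not any single hard estimate but rather the bookkeeping: confirming that the involution $t\leftrightarrow t^{-1}$ matches $u\leftrightarrow u^{-1}$ and that $\lambda\lambda'$ genuinely collapses to $1$ (which relies on the integrality of the normalisation exponents), after which the corollary is just the preceding remark rewritten in the variable $t$.
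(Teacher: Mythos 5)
Your proposal is correct and takes essentially the same route as the paper: the paper's proof of Corollary \ref{Jonesrationalform} is precisely the remark preceding it, which reformulates the condition in (\ref{eq:Cform}) as $(\alpha+(u^{-1}-u)\beta)(\alpha'+(u-u^{-1})\beta')=(u^2-1+u^{-2})\alpha\alpha'-(u-u^{-1})^2$ and identifies the second factor with $V_K(t)$ up to sign and power of $u$ via Theorem \ref{Jonesrationalformula}. You have simply made explicit the steps the paper leaves implicit, namely the evaluation $\begin{pmatrix}1&d\end{pmatrix}\bfC\begin{pmatrix}1\cr0\end{pmatrix}=\alpha'+(u-u^{-1})\beta'$, the cancellation $\lambda\lambda'=1$ of the normalising prefactors under $u\mapsto u^{-1}$, and the two parity cases showing $\alpha\alpha'=a(t)a(t^{-1})$.
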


\noindent Denote by $\gpC_0$ those elements of $\gpC$ for which $\alpha=0$.
\begin{lemma}  $\gpC_0=\{\pm\bfB_0\bfB_n\bfB_0|n\in\Z\}$.
\end{lemma}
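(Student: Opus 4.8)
The plan is to prove the two inclusions separately: the inclusion $\supseteq$ by a direct computation and verification against the normal form (\ref{eq:Cform}), and the reverse inclusion $\subseteq$ by a units argument in the Laurent ring $\Z[u,u^{-1}]$.

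First I would compute $\bfB_0\bfB_n\bfB_0$ explicitly. Since $[0]=0$ we have $\bfB_0=\begin{pmatrix}0&i\cr i&0\end{pmatrix}$, and a two-step multiplication gives
\[
\bfB_0\bfB_n\bfB_0=\begin{pmatrix}0&-iu^n\cr -iu^{-n}&-[n]\end{pmatrix}.
\]
This is a product of $\bfB_n$ matrices, hence lies in $\gpB\subset\gpC$, and its $(1,1)$-entry vanishes, so $\pm\bfB_0\bfB_n\bfB_0\in\gpC_0$; in the notation of (\ref{eq:Cform}) it corresponds to $\alpha=0$, $\beta=\mp u^n$. A quick check that the $(2,1)$- and $(2,2)$-entries indeed equal $i\beta'$ and $\frac{\beta-\beta'}{u-u^{-1}}$ confirms consistency with the normal form. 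This establishes $\supseteq$.

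For the reverse inclusion I would take an arbitrary $\bfC\in\gpC_0$, so $\alpha=0$ and hence also $\alpha'=0$. Specialising (\ref{eq:Cform}) to $\alpha=0$ shows $\bfC$ is completely determined by $\beta$,
\[
\bfC=\begin{pmatrix}0&i\beta\cr i\beta'&\dfrac{\beta-\beta'}{u-u^{-1}}\end{pmatrix},
\]
and the defining quadratic relation collapses to $\beta\beta'=1$. Thus $\beta\in\Z[u,u^{-1}]$ is a unit, with inverse $\beta'=\beta_{u\to u^{-1}}$. The key step is then to identify all such $\beta$: writing $\beta=\sum_j c_ju^j$ and letting $M,m$ be its top and bottom exponents, the top-degree term of $\beta\beta'$ is $c_Mc_m\,u^{M-m}$ with $c_Mc_m\neq0$; since $\beta\beta'=1$ is concentrated in degree $0$, we are forced to $M=m$, so $\beta$ is a single monomial $c_Mu^M$ with $c_M^2=1$, i.e. $\beta=\pm u^M$. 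The two possibilities $\beta=\pm u^n$ fed back into the displayed matrix recover precisely $\mp\bfB_0\bfB_n\bfB_0$, so $\bfC\in\{\pm\bfB_0\bfB_n\bfB_0\}$, giving $\subseteq$ and hence the lemma.

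I expect the only real content to be this last units computation, everything else being bookkeeping with (\ref{eq:Cform}); and since $M=m$ forces a monomial, no subtle cancellation can occur. The one point to keep in mind is that the argument must be run over $\Z[u,u^{-1}]$ rather than its fraction field, which is legitimate because elements of $\gpC$ are defined to have $\alpha,\beta\in\Z[u,u^{-1}]$, so $\beta$ is genuinely a Laurent polynomial whose unique inverse forces it to be a unit of the ring.
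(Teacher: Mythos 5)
Your proof is correct and takes essentially the same route as the paper: specialise the normal form (\ref{eq:Cform}) to $\alpha=0$, reduce the defining relation to $\beta\beta'=1$, deduce $\beta=\pm u^n$, and feed this back in to identify the matrix as $\mp\bfB_0\bfB_n\bfB_0$. You merely make explicit two steps the paper leaves implicit, namely the degree argument showing that solutions of $\beta\beta'=1$ in $\Z[u,u^{-1}]$ are exactly $\pm u^n$, and the direct computation of $\bfB_0\bfB_n\bfB_0$ giving the easy inclusion $\supseteq$.
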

\begin{proof}
In the case of $\alpha=0$, the condition in (\ref{eq:Cform}) reduces to $\beta\beta'=1$. Since $\beta\in\Z[u,u^{-1}]$, thus $\beta=\pm{}u^n$ for some $n\in\Z$. The matrix $\bfC$ is now determined by (\ref{eq:Cform}) to be
$\bfC=\begin{pmatrix}
0&i\beta\cr
i\beta'&\frac{\beta'-\beta}{u-u^{-1}}
\end{pmatrix}
=\pm\begin{pmatrix}
0&iu^n\cr
iu^{-n}&-[n]
\end{pmatrix}
=\mp\bfB_0\bfB_n\bfB_0$\end{proof}

\begin{definition}
    A pair of matrices $\bfC_1,\bfC_2\in\gpB$ are called a pivoting pair if they satisfy
    \[
   \begin{pmatrix}1&d\end{pmatrix}\bfC_1\begin{pmatrix}1\cr-d\end{pmatrix}
\cdot\begin{pmatrix}1&0\end{pmatrix}\bfC_2\begin{pmatrix}1\cr0\end{pmatrix}
=\begin{pmatrix}1&d\end{pmatrix}\bfC_2\begin{pmatrix}1\cr-d\end{pmatrix}
\cdot\begin{pmatrix}1&0\end{pmatrix}\bfC_1\begin{pmatrix}1\cr0\end{pmatrix}
    \]
    Equivalently, by (\ref{eq:Cprop}), $\alpha_1'\alpha_2=\alpha_1\alpha_2'$.
\end{definition}

\medskip\noindent It follows immediately from the definition that,
\begin{itemize}
    \item[(i)] for any $\bfC_0\in\gpC_0$ and $\bfC\in\gpB$, the pair $\bfC_0,\bfC$ is a pivoting pair;
    \item[(ii)] for pairs of matrices in $\gpB\backslash\gpC_0$, being a pivoting pair is an equivalence relation, namely it states that $\frac{\alpha_1}{\alpha_1'}=\frac{\alpha_2}{\alpha_2'}$. We denote the equivalence relation on the corresponding integer sequences by $\sim$.
\end{itemize}

\begin{example}\label{pivotingpairsbn} The matrices $\bfB_n\in\gpB$ all lie in the same equivalence class since $\alpha=[n]=\alpha'$. Similarly, any product of $\bfB_n$ matrices corresponding to a palindromic integer sequence will lie in this same equivalence class.
\end{example}

\begin{example}\label{generatingpivotingpairs}
For any $\bfC\in\gpB$, $n\in\Z$, the pair $\bfC,\bfC\bfB_n\bfC$ is a pivoting pair. This follows from the fact that the $(1,1)$ matrix entry of $\bfC_2\equiv\bfC\bfB_n\bfC$ is
\[
\alpha_2=\alpha\left(\frac{u^n\alpha-u^{-n}\alpha'}{u-u^{-1}}-u^n\beta-u^{-n}\beta'\right)
\]
where the factor in brackets is invariant under $u\rightarrow{}u^{-1}$. Thus all equivalence classes are infinite.
\end{example}

\begin{theorem}[\bf Pivoting Pairs Template]\label{pivotingpairstemplate} If two rational knots have integer sequence presentations related by the move
\[
(\bfn_0^{\epsilon_0},d_1,\bfn_1^{\epsilon_1},\ldots,d_k,\bfn_k^{\epsilon_k})
\longrightarrow
(\bfn_k^{\epsilon_k},d_k,\bfn_{k-1}^{\epsilon_{k-1}},\ldots,d_1,\bfn_1^{\epsilon_0})
\]
for some $d_1,\ldots,d_k\in\Z$, $\epsilon_0,\ldots,\epsilon_k\in\{1,*\}$ and integer sequences $\bfn_0\sim\bfn_1\sim\ldots\sim\bfn_k$,
then their Jones polynomials coincide.
\end{theorem}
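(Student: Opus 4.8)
The plan is to derive this exactly as Theorem \ref{firsttemplate} was derived from part (2) of the Key Lemma (Lemma \ref{keylemma}); the only new feature is that the blocks $\bfn_0,\ldots,\bfn_k$ may now differ, being required only to lie in a common pivoting class. First I would let $\bfC_j$ denote the product of $\bfB_n$ matrices associated with $\bfn_j$, so that, as in the proof of Theorem \ref{firsttemplate} and using Lemma \ref{Bproperties}(i), the matrix attached to the block $\bfn_j^{\epsilon_j}$ equals $\bfC_j^{\epsilon_j}\in\{\bfC_j,\bfC_j^*\}$ up to an overall sign that is irrelevant by Lemma \ref{almostsameJoneslemma}. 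Setting $\bfU_j=\bfV_j=\bfC_j^{\epsilon_j}$, the scalar attached to the left-hand integer sequence is $\begin{pmatrix}1&d\end{pmatrix}\bfU_0\bfB_{d_1}\bfU_1\cdots\bfU_{k-1}\bfB_{d_k}\bfU_k\begin{pmatrix}1\cr0\end{pmatrix}$, while reading the right-hand sequence (which is the word reversal of the left-hand one) gives $\begin{pmatrix}1&d\end{pmatrix}\bfV_k\bfB_{d_k}\bfV_{k-1}\cdots\bfV_1\bfB_{d_1}\bfV_0\begin{pmatrix}1\cr0\end{pmatrix}$. These are precisely the two sides of the conclusion of Lemma \ref{keylemma}(2), so by Theorem \ref{Jonesrationalformula} and Lemma \ref{almostsameJoneslemma} it suffices to check its two hypotheses.

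The first hypothesis is immediate because $\bfU_j=\bfV_j$. For the second, substituting $\bfU_j=\bfV_j=\bfC_j^{\epsilon_j}$ reduces the required identity, for each $j\neq l$, to
\[
\begin{pmatrix}1&d\end{pmatrix}\bfC_l^{\epsilon_l}\begin{pmatrix}1\cr-d\end{pmatrix}
\begin{pmatrix}1&0\end{pmatrix}\bfC_j^{\epsilon_j}\begin{pmatrix}1\cr0\end{pmatrix}
=\begin{pmatrix}1&d\end{pmatrix}\bfC_j^{\epsilon_j}\begin{pmatrix}1\cr-d\end{pmatrix}
\begin{pmatrix}1&0\end{pmatrix}\bfC_l^{\epsilon_l}\begin{pmatrix}1\cr0\end{pmatrix},
\]
which is exactly the assertion that $\bfC_l^{\epsilon_l}$ and $\bfC_j^{\epsilon_j}$ form a pivoting pair. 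The decisive point is that the $(1,1)$-entry $\alpha$ of any product of $\bfB_n$ matrices is unchanged by the $*$ operation: Lemma \ref{Bproperties}(ii) gives $\begin{pmatrix}1&0\end{pmatrix}\bfC^*\begin{pmatrix}1\cr0\end{pmatrix}=\begin{pmatrix}1&0\end{pmatrix}\bfC\begin{pmatrix}1\cr0\end{pmatrix}$, so $\bfC_j^{\epsilon_j}$ carries the same $\alpha$, and hence the same $\alpha'$, as $\bfC_j$ whether $\epsilon_j=1$ or $*$. Therefore, in the equivalent form $\alpha_l'\alpha_j=\alpha_l\alpha_j'$ of the pivoting condition, the displayed identity becomes exactly $\bfn_l\sim\bfn_j$, which holds for every pair since $\bfn_0\sim\cdots\sim\bfn_k$ lie in one pivoting class. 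Both hypotheses being verified, Lemma \ref{keylemma}(2) yields the equality of the two scalars and hence of the Jones polynomials.

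The step needing the most care is this reduction of the second hypothesis, which is where the argument genuinely departs from Theorem \ref{firsttemplate}. There all blocks were equal, so the second hypothesis split into a factor-by-factor comparison, each factor $\begin{pmatrix}1&d\end{pmatrix}\bfU_l\begin{pmatrix}1\cr-d\end{pmatrix}$ or $\begin{pmatrix}1&0\end{pmatrix}\bfU_j\begin{pmatrix}1\cr0\end{pmatrix}$ being individually $*$-invariant by Lemma \ref{Bproperties}(ii). Here the blocks differ, those individual factors no longer agree separately on the two sides, and only their products coincide — which is precisely the pivoting relation, and the reason the hypothesis $\bfn_0\sim\cdots\sim\bfn_k$ is the right one. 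A minor point to record is that this chain should be read as asserting that every pair $(\bfn_j,\bfn_l)$ is a pivoting pair; this is automatic from transitivity of $\sim$ on $\gpB\backslash\gpC_0$, any degenerate block lying in $\gpC_0$ being handled by the observation that elements of $\gpC_0$ pivot with all of $\gpB$.
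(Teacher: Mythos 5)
Your proof is correct and takes essentially the same route as the paper: the paper's own (very terse) proof likewise invokes Lemma \ref{keylemma}(2) with $\bfU_j=\bfV_j=\bfC_j^{\epsilon_j}$, notes the first hypothesis is automatic, and reduces the second hypothesis to the pivot condition together with Lemma \ref{Bproperties}(ii). Your write-up merely makes explicit the details the paper leaves implicit --- the $*$-invariance of the relevant scalars, the sign bookkeeping via Lemma \ref{almostsameJoneslemma}, and the reading of the chain $\bfn_0\sim\cdots\sim\bfn_k$ as pairwise pivoting --- all of which are handled correctly.
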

\begin{proof}
The proof is as for Theorem \ref{firsttemplate} using Lemma \ref{keylemma}(2), now with $\bfU_j=\bfV_j=\bfC_j^{\epsilon_j}$. The first condition is automatically satisfied since $\bfU_j=\bfV_j$. The second condition follows from the pivot condition along with Lemma \ref{Bproperties}(ii).
\end{proof}

\begin{remark}Theorem \ref{pivotingpairstemplate} is a generalization of the first template (Theorem \ref{firsttemplate}), the latter being the special case where $\bfn_j$ are chosen to be identical $\forall{}j$.
\end{remark}
\begin{example}
Some examples of pivoting pairs which do not come from the general processes of Examples \ref{pivotingpairsbn} or \ref{generatingpivotingpairs} are $(5,3,3,-1)\sim(2,6,2,-2)\sim(1,6,3,4)$ while $(5,-3,-6,-4)\sim(5,-2,-4,-4)$. These are checked by brute force, that is, computation of $\alpha$. For example in the case of the last pair,
\[\alpha_1=-u^{-1}[3][4](u^2-1+u^{-2})(u^8+3u^6+5u^4+5u^2+5+4u^{-2}+3u^{-4}+2u^{-6}+u^{-8})\]
\[\alpha_2=-u^{-1}[5](u^8+3u^6+5u^4+5u^2+5+4u^{-2}+3u^{-4}+2u^{-6}+u^{-8})\]
in which the differing factors are all invariant under $u\rightarrow{}u^{-1}$.
This last pair is responsible for the Jones rational coincidence of Example \ref{firstpivotcase} using the pivoting pairs template with $k=1$, $\epsilon_0=\epsilon_1=1$ and $d_1=0$.
\end{example}

\begin{conjecture}
All Jones rational coincidences can be obtained by using the moves in the templates of Theorems \ref{pivotingpairstemplate} and \ref{secondtemplate}.
\end{conjecture}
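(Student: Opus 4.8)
The plan is to convert the completeness claim into a statement about factorisation of a single polynomial attached to each rational knot, and then to ask whether the template moves realise every factorisation ambiguity permitted by a Jones coincidence. First I would reduce the problem to the top-left matrix entry. By Lemma \ref{almostsameJoneslemma} and Theorem \ref{Jonesrationalformula} a coincidence $V_K=V_{K'}$ is an equality on the nose, and Corollary \ref{Jonesrationalform} reads
\[
V_K(t)V_K(t^{-1})=(2+t+t^{-1})-(1+t+t^{-1})\,\alpha_K(t)\,\alpha_K(t^{-1}),
\]
where $\alpha_K$ is the $(1,1)$-entry of a matrix product $\bfC=\bfB_{n_k}\cdots\bfB_{n_1}$ representing $K$ and $\alpha'$ denotes the image of $\alpha$ under $u\mapsto u^{-1}$ (equivalently $t\mapsto t^{-1}$). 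Since $1+t+t^{-1}$ is a nonzerodivisor, $V_K=V_{K'}$ forces the \emph{norm equality} $\alpha_K(t)\alpha_K(t^{-1})=\alpha_{K'}(t)\alpha_{K'}(t^{-1})$. Because the determinant $p$ is a special value of $V_K$ and only finitely many rational knots share a determinant, each potential coincidence class is finite; using Schubert's Theorem and Lemma \ref{evencontinuedfraclemma} one may moreover take all competing presentations to have even length in even integers, so that every $\alpha$ lies in $\Z[t,t^{-1}]$. (Norm equality is slightly weaker than $V_K=V_{K'}$, as it cannot see chirality; but since every template move preserves $V_K$ exactly, any chain of moves we construct respects chirality automatically, and it suffices to reason with the norm at the combinatorial level.)

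The second step interprets the norm equality via unique factorisation. The ring $\Z[t,t^{-1}]$ is a UFD on which $t\mapsto t^{-1}$ acts as an involution, so the irreducible factors of the palindromic product $\alpha\alpha'$ occur in conjugate pairs $\{f(t),f(t^{-1})\}$; the norm equality says precisely that $\alpha_{K'}$ differs from $\alpha_K$, up to a unit $\pm t^m$, only by conjugating (``flipping'') some subset of these pairs. On the side of the moves, sequence reversal sends $\bfC=\bfB_{n_1}\cdots\bfB_{n_r}$ to $\backC$ with $\alpha_{\backC}=\alpha'$ (the full conjugation), the $^*$-operation fixes $\alpha$, and — by its very definition $\alpha_1/\alpha_1'=\alpha_2/\alpha_2'$ — the pivoting relation $\sim$ is exactly the condition under which reversal of a concatenation of blocks preserves the Jones scalar (Key Lemma~(2) and Theorem \ref{pivotingpairstemplate}), while Template II (Theorem \ref{secondtemplate}) implements the global substitution $\bfn\mapsto\bfn^*$. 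All three templates are therefore norm-preserving, and the conjecture becomes the realisability assertion: whenever $\bfC,\bfC'\in\gpB$ represent rational knots with $\alpha_{\bfC}\alpha_{\bfC}'=\alpha_{\bfC'}\alpha_{\bfC'}'$ and $V$ literally equal, the factor-flip relating $\alpha_{\bfC}$ to $\alpha_{\bfC'}$ can be produced by a finite composition of reversals, $^*$'s and pivots.

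The hard part — and the reason the statement is only a conjecture — is exactly this realisability, which is a completeness theorem for a rewriting system possessing no known normal form, and two genuine obstacles stand in the way. First, not every matrix of the form (\ref{eq:Cform}) in the ambient group $\gpC$ lies in the subgroup $\gpB$ of actual products of $\bfB_n$, so one cannot freely manipulate factorisations of $\alpha$ inside $\Z[t,t^{-1}]$: one needs an intrinsic description of which conjugate-flips keep us inside $\gpB$ and correspond to admissible continued fractions (odd numerator, the parity constraints of \S5). Second, one must produce the connecting moves explicitly. I would attempt an induction on the determinant $p$ (or on $\sum_j|n_j|$), peeling a pivoting-homogeneous block off one end using the relations $\bfB_n\bfB_0\bfB_m=-\bfB_{n+m}$ and Lemma \ref{Bproperties}, and arguing that any residual discrepancy in the flip-pattern can be pushed into a single block that is reversible or conjugable and hence handled by Theorems \ref{secondtemplate} and \ref{pivotingpairstemplate}.

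The essential open point is that this reduction must be shown to terminate — equivalently, that the groupoid generated by the templates acts transitively on each norm class inside $\gpB$. Proving transitivity would require a structural understanding of the multiplicative monoid $\gpB$ relative to the polynomial factorisation of $\alpha$, a link that is presently missing; the exhaustive verification for all $p<900$ in \cite{R} is the evidence that transitivity in fact holds, but it gives no uniform mechanism, and I expect the construction of a normal form for $\gpB$ under these three moves to be the decisive and most difficult ingredient.
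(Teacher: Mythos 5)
The statement you were asked to prove is a \emph{conjecture}: the paper offers no proof of it at all, only the exhaustive computational verification (cited to \cite{R}) that every Jones rational coincidence with determinant $<900$ arises from the moves of Theorems \ref{pivotingpairstemplate} and \ref{secondtemplate}. So there is no ``paper proof'' to match your attempt against, and your write-up is correct in refusing to claim one. Your partial reductions are sound and in fact track the paper's own structural observations closely: the passage from $V_K=V_{K'}$ to the norm equality $\alpha\alpha'=\tilde\alpha\tilde\alpha'$ is exactly the content of the reformulated condition preceding Corollary \ref{Jonesrationalform} (with $u^2-1+u^{-2}=-(1+t+t^{-1})$ and $(u-u^{-1})^2=-(2+t+t^{-1})$), your identification of reversal with $\alpha\mapsto\alpha'$ follows from $\backC=\bfC^T_{u\rightarrow u^{-1}}$, the $^*$-invariance of $\alpha$ is Lemma \ref{Bproperties}(ii), and your reading of the pivoting relation as $\alpha_1/\alpha_1'=\alpha_2/\alpha_2'$ is verbatim the paper's definition. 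The obstruction you isolate --- that $\gpB$ is a proper subgroup of $\gpC$, so one cannot realise arbitrary conjugate-flips of irreducible factors of $\alpha$ in $\Z[t,t^{-1}]$ by actual products of $\bfB_n$ matrices, and that no normal form or transitivity statement for the template groupoid on a norm class is known --- is precisely why the statement remains a conjecture.

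Two small points of care if you pursue this program. First, $\alpha$ is an invariant of the \emph{presentation} $(n_k,\ldots,n_1)$, not of the knot: only the norm $\alpha\alpha'$ (equivalently $V_KV_K(t^{-1})$, up to the affine correction of Corollary \ref{Jonesrationalform}) is pinned down by $K$, so ``acting transitively on each norm class inside $\gpB$'' must be formulated over all admissible presentations of both knots simultaneously --- this is a larger orbit problem than a statement about two fixed matrices $\bfC,\bfC'$. Second, your dismissal of the chirality loss is too quick as stated: the norm equality is mirror-blind, so a transitivity theorem for the norm class would prove too much (it would connect $K$ to its mirror, which generically has a different Jones polynomial); any completeness argument must carry the exact equality $V_K=V_{K'}$, not merely the norm, through the induction. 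Neither point invalidates your assessment, which correctly matches the paper's: the conjecture is open, the templates are verified to be norm- and Jones-preserving, and the evidence is the determinant $<900$ census.
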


\noindent This is verified explicitly in \cite{R} for all Jones rational coincidences with determinant $<900$.

\section{Jones rational coincidences, determinant $\!\!<900$}
Here is a list of all Jones rational coincidences with determinant $<900$, discounting mirror images, in order of increasing determinant. A star indicates an amphicheiral knot and a hash indicates pairs whose HOMFLYPT polynomials also coincide (suspected mutants). Triple coincidences are listed as such. Since rational knots are alternating, the span of their Jones polynomial gives the crossing number of the knot \cite{Kauffman}; the crossing numbers of knots here range from 10 in the first coincidence listed, up to 32 for the pair $(\frac{841}{782},\frac{841}{434})$.

\bigskip\noindent $(\frac{49}{22},\frac{49}{36})$, $(\frac{81}{44},\frac{81}{62})$,
$(\frac{121}{56},\frac{121}{100})$, $(\frac{121}{32},\frac{121}{76})$,
$(\frac{135}{62},\frac{135}{-28})$, $(\frac{147}{64},\frac{147}{106})$,
$(\frac{153}{70},\frac{153}{-32})$,

\smallskip\noindent $(\frac{161}{60},\frac{161}{74})$,
$(\frac{169}{40},\frac{169}{118})$, $(\frac{169}{90},\frac{169}{142})$,
$(\frac{171}{74},\frac{171}{-40  })$, $(\frac{189}{44},\frac{189}{-82})$,
$(\frac{207}{146},\frac{207}{164})$, $(\frac{209}{74},\frac{209}{150})$,

\smallskip\noindent
$(\frac{225}{106},\frac{225}{196})$, $(\frac{225}{122},\frac{225}{158})$,
$(\frac{231}{86},\frac{231}{170})$, $(\frac{243}{136},\frac{243}{190})$,
$(\frac{245}{106},\frac{245}{176})$, $(\frac{245}{108},\frac{245}{-88})$,
$(\frac{253}{104},\frac{253}{196})$,

\smallskip\noindent $(\frac{255}{92},\frac{255}{-112})$,
$(\frac{259}{96},\frac{259}{180})$, $(\frac{261}{148},\frac{261}{184})$,
$(\frac{275}{128},\frac{275}{228})$, $(\frac{275}{102},\frac{275}{202})$,
$(\frac{279}{58},\frac{279}{-128})$, $(\frac{279}{196},\frac{279}{214})$,

\smallskip\noindent $(\frac{287}{88},\frac{287}{130})$, $(\frac{289}{86},\frac{289}{120})$,
$(\frac{289}{50},\frac{289}{186})$, $(\frac{289}{152},\frac{289}{254})$,
$(\frac{297}{136},\frac{297}{-62})$, $(\frac{297}{116},\frac{297}{134})^\#$,
$(\frac{301}{80},\frac{301}{136})$,

\smallskip\noindent$(\frac{315}{68},\frac{315}{122})^{\!\#}$,
$(\frac{319}{196},\frac{319}{225})$, $(\frac{329}{68},\frac{329}{-214})$,
$(\frac{329}{122},\frac{329}{136})$, $(\frac{333}{76},\frac{333}{130})^{\!\#}$,
$(\frac{343}{148},\frac{343}{246})$, $(\frac{343}{90},\frac{343}{104})^{\!\#}$,

\smallskip\noindent
$(\frac{351}{190},\frac{351}{226})$, $(\frac{351}{152},\frac{351}{-82})$,
$(\frac{351}{136},\frac{351}{154})^\#$, $(\frac{361}{58},\frac{361}{248})$,
$(\frac{361}{172},\frac{361}{324})$, $(\frac{361}{134},\frac{361}{210})$,
$(\frac{361}{94},\frac{361}{284})$,

\smallskip\noindent $(\frac{363}{166},\frac{363}{298})$,
$(\frac{363}{98},\frac{363}{230})$, $(\frac{369}{200},\frac{369}{254})$,
$(\frac{369}{86},\frac{369}{-160})$, $(\frac{377}{70},\frac{377}{278})^*$,
$(\frac{385}{212},\frac{385}{268})$, $(\frac{399}{122},\frac{399}{164})$,

\smallskip\noindent
$(\frac{405}{224},\frac{405}{314})$, $(\frac{423}{88},\frac{423}{-194})$,
$(\frac{427}{192},\frac{427}{-88})$, $(\frac{441}{106},\frac{441}{358})$,
$(\frac{441}{202},\frac{441}{-92})$, $(\frac{441}{130},\frac{441}{-164})$,
$(\frac{441}{250},\frac{441}{304})$,

\smallskip\noindent
$(\frac{441}{188},\frac{441}{314})$, $(\frac{441}{230},\frac{441}{398})$,
$(\frac{451}{96},\frac{451}{260})$, $(\frac{451}{162},\frac{451}{184})$,
$(\frac{455}{186},\frac{455}{361})$, $(\frac{459}{100},\frac{459}{-206})$,
$(\frac{459}{316},\frac{459}{352})$,

\smallskip\noindent
$(\frac{473}{140},\frac{473}{305})$, $(\frac{473}{108},\frac{473}{174})$,
$(\frac{477}{214},\frac{477}{-\!104})$, $(\frac{477}{176},\frac{477}{-142})$,
$(\frac{481}{70},\frac{481}{-226})$, $(\frac{493}{194},\frac{493}{364})$,
$(\frac{495}{184},\frac{495}{-146})$,

\smallskip\noindent
$(\frac{495}{338},\frac{495}{392})$, $(\frac{495}{-218},\frac{495}{112},\frac{495}{292})$,
$(\frac{505}{212},\frac{505}{192})^\#$, $(\frac{505}{222},\frac{505}{-182})^*$,
$(\frac{507}{118},\frac{507}{352})$, $(\frac{507}{272},\frac{507}{428})$,

\smallskip\noindent
$(\frac{513}{124},\frac{513}{238})$, $(\frac{513}{226},\frac{513}{-116})$, $(\frac{513}{278},\frac{513}{350})$,
$(\frac{517}{216},\frac{517}{404})$, $(\frac{517}{190},\frac{517}{212})$,
$(\frac{527}{336},\frac{527}{-98})$, $(\frac{529}{484},\frac{529}{254})$,

\smallskip\noindent
$(\frac{529}{392},\frac{529}{488})$, $(\frac{529}{344},\frac{529}{68})$, $(\frac{529}{116},\frac{529}{208})$,
$(\frac{529}{160},\frac{529}{298})$, $(\frac{531}{196},\frac{531}{-158})$,
$(\frac{531}{124},\frac{531}{-344})$, $(\frac{533}{138},\frac{533}{216})$,

\smallskip\noindent
$(\frac{539}{246},\frac{539}{-118})$,
$(\frac{539}{244},\frac{539}{398})$, $(\frac{539}{386},\frac{539}{232})$,
$(\frac{549}{128},\frac{549}{376})$, $(\frac{551}{326},\frac{551}{402})$,
$(\frac{553}{360},\frac{553}{-114})$, $(\frac{567}{260},\frac{567}{-118})$,

\smallskip\noindent
$(\frac{567}{-\!440},\frac{567}{314})$,
$(\frac{581}{172},\frac{581}{338})^{\!\#}$\!, $(\frac{585}{268},\frac{585}{-\!122})$,
$(\frac{589}{88},\frac{589}{274})$, $(\frac{595}{156},\frac{595}{326})^{\!\#}$\!,
$(\frac{603}{218},\frac{603}{272})^{\!\#}$\!, $(\frac{605}{274},\frac{605}{494})$,

\smallskip\noindent
$(\frac{605}{164},\frac{605}{384})$,
$(\frac{621}{136},\frac{621}{343})^\#$, $(\frac{621}{352},\frac{621}{424})$,
$(\frac{621}{190},\frac{621}{280})^\#$, $(\frac{623}{404},\frac{623}{-130})$,
$(\frac{625}{426},\frac{625}{76})$, $(\frac{625}{224},\frac{625}{274})$,

\smallskip\noindent
$(\frac{625}{574},\frac{625}{324})$,
$(\frac{625}{474},\frac{625}{524})$, $(\frac{637}{288},\frac{637}{470})$,
$(\frac{637}{456},\frac{637}{272})$, $(\frac{639}{140},\frac{639}{-286})^\#$,
$(\frac{639}{418},\frac{639}{346})$, $(\frac{639}{362},\frac{639}{416})$,

\smallskip\noindent
$(\frac{649}{-\!240},\frac{649}{398})$,
$(\frac{651}{-\!292},\frac{651}{452})$, $(\frac{651}{446},\frac{651}{-142})$,
$(\frac{651}{460},\frac{651}{502})$, $(\frac{657}{-\!290},\frac{657}{148})^{\!\#}$\!\!,
$(\frac{657}{356},\frac{657}{446})$, $(\frac{663}{370},\frac{663}{-\!98})$,

\smallskip\noindent
$(\frac{665}{142},\frac{665}{492})$,
$(\frac{675}{-584},\frac{675}{314})$, $(\frac{675}{-154},\frac{675}{296})^\#$,
$(\frac{675}{152},\frac{675}{-298})^\#$, $(\frac{679}{180},\frac{679}{-492})$,
$(\frac{679}{284},\frac{679}{402})$,

\smallskip\noindent
$(\frac{689}{314},\frac{689}{-110})$, $(\frac{693}{304},\frac{693}{502},\frac{693}{250})$,
$(\frac{693}{184},\frac{693}{382},\frac{693}{436})$, $(\frac{703}{146},\frac{703}{516})$,
$(\frac{703}{262},\frac{703}{-308})$, $(\frac{711}{245},\frac{711}{326})$,

\smallskip\noindent
$(\frac{711}{-166},\frac{711}{464})$, $(\frac{711}{500},\frac{711}{-392})$,
$(\frac{715}{552},\frac{715}{-292})$,
$(\frac{715}{412},\frac{715}{588})$, $(\frac{721}{324},\frac{721}{-500})$,
$(\frac{721}{-556},\frac{721}{-268})$,

\smallskip\noindent
$(\frac{721}{318},\frac{721}{332})$,
$(\frac{729}{296},\frac{729}{136})$, $(\frac{729}{676},\frac{729}{352})$,
$(\frac{729}{-170},\frac{729}{496})$,
$(\frac{729}{478},\frac{729}{-152})$, $(\frac{729}{458},\frac{729}{512})$,
$(\frac{729}{620},\frac{729}{188})$,

\smallskip\noindent
$(\frac{729}{406},\frac{729}{568})$,
$(\frac{735}{302},\frac{735}{-412})$, $(\frac{735}{314},\frac{735}{524})$,
$(\frac{737}{302},\frac{737}{570})$,
$(\frac{737}{332},\frac{737}{600})$, $(\frac{737}{126},\frac{737}{394})$,
$(\frac{741}{548},\frac{741}{158})$,

\smallskip\noindent
$(\frac{745}{288},\frac{745}{-462})$,
$(\frac{745}{328},\frac{745}{-442})$, $(\frac{747}{232},\frac{747}{-266})$,
$(\frac{747}{-526},\frac{747}{470})$,$(\frac{749}{406},\frac{749}{-550})$, $(\frac{749}{-326},\frac{749}{530})$,

\smallskip\noindent
$(\frac{755}{312},\frac{755}{-468})$,
$(\frac{755}{272},\frac{755}{448})$,
$(\frac{759}{448},\frac{759}{-172})$, $(\frac{765}{362},\frac{765}{668})$,
$(\frac{765}{592},\frac{765}{-428})$,
$(\frac{777}{458},\frac{777}{214})^{\!\#}$\!, $(\frac{779}{162},\frac{779}{572})$,

\smallskip\noindent
$(\frac{783}{350},\!\frac{783}{-\!172})$, $(\frac{783}{620},\!\frac{783}{-\!424})$,
$(\frac{783}{338},\frac{783}{164})$, $(\frac{783}{-\!568},\frac{783}{476})$,
$(\frac{791}{466},\frac{791}{234})^{\!\#}$\!\!,
$(\frac{793}{342},\frac{793}{-\!146})$, $(\frac{801}{-\!578},\frac{801}{590})$,

\smallskip\noindent
$(\frac{801}{454},\frac{801}{544})$, $(\frac{801}{634},\frac{801}{-434})$,
$(\frac{803}{288},\frac{803}{310})^{\!\#}$\!\!, $(\frac{817}{124},\frac{817}{382})$,
$(\frac{819}{590},\!\frac{819}{-\!346})$,
$(\frac{819}{184},\frac{819}{-\!362})$, $(\frac{819}{374},\frac{819}{-562})$,

\smallskip\noindent
$(\frac{819}{628},\!\frac{819}{-\!464})$, $(\frac{833}{-\!342},\frac{833}{484})$,
$(\frac{833}{540},\!\frac{833}{-\!174})$, $(\frac{833}{232},\frac{833}{218})^{\!\#}$\!,
$(\frac{833}{-\!536},\frac{833}{596})$,
$(\frac{837}{604},\frac{837}{512})$, $(\frac{837}{344},\frac{837}{158})$,

\smallskip\noindent
$(\frac{837}{260},\frac{837}{-298})$, $(\frac{841}{726},\frac{841}{204})$,
$(\frac{841}{378},\frac{841}{-260})$, $(\frac{841}{550},\frac{841}{86})$,
$(\frac{841}{608},\frac{841}{318})$,
$(\frac{841}{666},\frac{841}{144})$, $(\frac{841}{782},\frac{841}{434})$,

\smallskip\noindent
$(\frac{845}{-\!586},\frac{845}{194})$, $(\frac{845}{454},\frac{845}{714})$,
$(\frac{847}{692},\!\frac{847}{-\!386})$, $(\frac{847}{354},\!\frac{847}{-\!614})$,
$(\frac{847}{540},\frac{847}{232})$,
$(\frac{855}{-\!178},\frac{855}{392})$, $(\frac{867}{358},\!\frac{867}{-\!254})$,

\smallskip\noindent
$(\frac{867}{188},\!\frac{867}{-\!526})$, $(\frac{867}{562},\!\frac{867}{-\!154})$,
$(\frac{867}{460},\frac{867}{766})$, $(\frac{871}{-\!720},\frac{871}{138})$,
$(\frac{873}{-\!182},\frac{873}{400})$,
$(\frac{873}{272},\!\frac{873}{-\!310})$, $(\frac{875}{256},\frac{875}{506})$,

\smallskip\noindent
$(\frac{889}{248},\frac{889}{502})$, $(\frac{891}{-208},\frac{891}{386})$,
$(\frac{891}{692},\frac{891}{494})$, $(\frac{893}{576},\frac{893}{-184})$

\medskip\noindent It is striking to see the prevalence of squares or at least multiplicities in the prime decomposition of the determinant in Jones rational coincidences. However a direct condition on a pair $(\frac{p}{q},\frac{p}{q'})$ to form a Jones rational coincidence has yet to be found (other than direct computation of the associated Jones polynomials!)


\begin{thebibliography}{00}
 \bibitem{Amankwah} D.~Amankwah, The Jones polynomial and its limitations, (2014) {\tt arXiv:1407.2196v3}
 \bibitem{atlas} D.~Bar-Natan, S.~Morrison et al., The Knot Atlas, {\tt http://katlas.org}
 \bibitem{Conway} J.H..~Conway, An enumeration of knots and links and some of their algebraic properties, {\it Proceedings of the Conference on  Computational Problems in Abstract Algebra, Oxford 1967}, Ed J.~Leech, Pergamon Press (1970) 329--358
\bibitem{Duzhin} S.~Duzhin, M.~Shkolnikov, A formula for the HOMFLY polynomial of rational links, {\sl Arnold Math. J.} {\bf 1} (2015) 345--359
\bibitem{HOMFLY} P.~Freyd, D.~Yetter, J.~Hoste, W.B.R.~Lickorish, K.~Millett, A.~Ocneanu, A new polynomial invariant of knots and links, {\sl Bu;;. Amer. Math. Soc.} {\bf 12} (1985) 239--246
\bibitem{Jones} V.F.R.~Jones, A polynomial invariant for knots via von Neumann algebras, {\it Bull. Amer. Math. Soc. (N.S.)} {\bf 12} (1985) 103--111
\bibitem{Jones87} V.F.R.~Jones, Hecke algebra representations of braid groups and link polynomials, {\sl Ann. Math.} {\bf 120} (1987) 335--388
\bibitem{Jones92} V.F.R. Jones, Commuting transfer matrices and link polynomials, {\sl International Journal of Mathematics} {\bf 3} (1992) 205--212
\bibitem{Kanenobu} T.~Kanenobu, Infinitely many knots with the same polynomial invariant, {\sl Proc. Amer. Math. Soc.} {\bf 97} (1986) 158--162
\bibitem{Kanenobu86} T.~Kanenobu, Examples on polynomial invariants of knots and links, {\sl Math. Ann.} {\bf 275} (1986) 555--572
\bibitem{KanenobuSumi} T.~Kanenobu, T.~Sumi, Polynomial invariants of 2-bridge knots through 22 crossings, {\sl Mathematics of Computation} {\bf 60} (1993) 771--778
\bibitem{KL} L.~Kauffman, S.~Lambropoulou, On the classification of rational tangles, {\sl Adv. in Appl. Math.} {\bf 33} (2004) 199--237
\bibitem{KL2} L.~Kauffman, S.~Lambropoulou, On the classification of rational knots, {\sl Enseign.~Math. (2)} {\bf 49} (2003) 357--410
\bibitem{Kauffman} L.~Kauffman,  State models and the Jones polynomial, {\it Topology} {\bf 26} (1997) 395--407
\bibitem{P} J.H.~Przytycki, Search for different links with the same Jones' type polynomials: Ideas from Graph Theory and Statistical Mechanics, {\sl Panoramas of Mathematics (Warsaw 1992/4)},  Banach Center Publ. {\bf 34} (1995) 121–-148
\bibitem{PT} J.H.~Przytycki, P.~Traczyk, Invariants of links of Conway type, {\sl Kobe J. Math.} {\bf 4} (1988) 115--139
 \bibitem{R} O.~Rosenstein, The Jones polynomial and families of knots, {\sl Ph.D. Thesis} (2021) Hebrew University of Jerusalem
\bibitem{TS} R.E.~Tuzun, A.S.~Sikora,  Verification of the Jones unknot conjecture up to 22 crossings, {\sl J. Knot Theory Ramif.} {\bf 27} (2018), no. 3, 1840009, 18 pp.

 \bibitem{Schubert} H.~Schubert, Knoten mit zwei Br\"ucken, {\sl Math. Zeit.} {\bf 65} (1956) 133--170


 \end{thebibliography}
\end{document}